\documentclass{amsart}

\usepackage{amssymb}
\usepackage{amsfonts}
\usepackage{amsmath}
\usepackage{amsthm}
\usepackage{graphicx}
\usepackage{mathrsfs}
\usepackage{dsfont}
\usepackage{amscd}
\usepackage{multirow}
\usepackage{palatino}
\usepackage{mathpazo}
\usepackage{mathtools}
\usepackage[all]{xy}
\linespread{1.05}        
\usepackage[scaled]{helvet} 
\usepackage{courier} 
\normalfont
\usepackage[T1]{fontenc}
\usepackage{calligra}
\usepackage{verbatim}
\usepackage[usenames,dvipsnames]{color}
\usepackage[colorlinks=true,linkcolor=Blue,citecolor=Violet]{hyperref}
\usepackage{cite}
\usepackage{enumitem}

\makeatletter
\@namedef{subjclassname@2010}{%
  \textup{2010} Mathematics Subject Classification}
\makeatother

\theoremstyle{plain}
\newtheorem{theorem}{Theorem}[section]

\newtheorem{lemma}[theorem]{Lemma}
\newtheorem{proposition}[theorem]{Proposition}

\newtheorem{theorem-definition}[theorem]{Theorem-Definition}

\theoremstyle{definition}
\newtheorem{definition}[theorem]{Definition}

\theoremstyle{remark}



\numberwithin{equation}{section}
 
\frenchspacing


\newcommand{\dd}{{\mathsf{d}}}
\newcommand{\N}{{\mathds{N}}}

\newcommand{\Q}{{\mathds{Q}}}
\newcommand{\R}{{\mathds{R}}}
\newcommand{\C}{{\mathds{C}}}

\newcommand{\A}{{\mathcal{A}}}
\newcommand{\B}{{\mathcal{B}}}
\newcommand{\M}{{\mathfrak{M}}}

\newcommand{\qpropinquity}[1]{{\mathsf{\Lambda}_{#1}}}



\newcommand{\BaireSpace}{{\mathscr{N}}}

\newcommand{\diag}{\mathrm{diag}}

\renewcommand{\geq}{\geqslant}
\renewcommand{\leq}{\leqslant}


\hyphenation{Gro-mov}
\hyphenation{Haus-dorff}

\allowdisplaybreaks[1]

\makeatletter
\newcommand{\vast}{\bBigg@{4}}
\newcommand{\Vast}{\bBigg@{5}}
\makeatother

\begin{document}
  
\title[ Gromov-Hausdorff propinquity  and Christensen-Ivan quantum metrics]{quantum Gromov-Hausdorff propinquity convergence of Christensen-Ivan quantum metrics on AF algebras}
\author{Clay Adams}

\author{Konrad Aguilar}
\address{Department of Mathematics and Statistics, Pomona College, 610 N. College Ave., Claremont, CA 91711} 
\email{konrad.aguilar@pomona.edu}
\urladdr{\url{https://aguilar.sites.pomona.edu}}
\thanks{The second author is supported by NSF grant DMS-2316892}

\author{Esteban Ayala}

\author{Evelyne Knight}

\author{Chloe Marple}

\date{\today}
\subjclass[2000]{Primary:  46L89, 46L30, 58B34.}
\keywords{ Gromov--Hausdorff propinquity, quantum metric spaces, Effros--Shen algebras, UHF algebras, spectral triples}

\begin{abstract}
 We provide convergence in the quantum Gromov-Hausdorff propinquity of Latr\'emoli\`ere of some sequences of infinite-dimensional Leibniz compact quantum metric spaces of Rieffel given by AF algebras and Christensen-Ivan spectral spaces. The main examples are convergence of Effros-Shen algebras and UHF algebras.
\end{abstract}
\maketitle
\tableofcontents

\section{Introduction and Background}

The first example of convergence of sequences of infinite-dimensional quantum metric spaces was   established by Rieffel \cite{Rieffel00}, where he showed that the quantum tori converged with respect their parameters that defined their anti-commutation relation. This was accomplished by the introduction of the theory of quantum metric spaces and a noncommutative analogue to the Gromov-Hausdorff distance both introduce by Rieffel in \cite{Rieffel98a, Rieffel00}, respectively. This introduced an new field of study known as noncommutative metric geometry, which has its roots from work of Connes in \cite{Connes, Connes89} and Gromov \cite{Gromov81}.

Since the introduction of Rieffel's noncommutative analogue to the Gromov-Hausdorff distance, there has been much progress in developing noncommutative analogues of the Gromov-Hausdorff distance to capture the C*-algebraic structure of the quantum metric space \cite{li03, Kerr02, wu06a, Latremoliere13, Latremoliere13b}. In particular,   in \cite{Latremoliere13c},  Latr\'emoli\`ere proved convergence of the quantum tori in this stronger sense. Moreoever, 
the Gromov-Hausdorff propinquity of Latr\'emoli\`ere first introduced in \cite{Latremoliere13} has been adapted to capture more structure such as module structure \cite{Latremoliere16} and spectral triple structure \cite{Latremoliere22a, Latremoliere22b}.

Another example of convergence of infinite-dimensional quantum metric spaces appeared in \cite{Aguilar-Latremoliere15}, where it was shown that the Effros-Shen algebras \cite{Effros80b} are continuous in Gromov-Hausdorff propinquity with respect to their natural parameter space of irrationals in $(0,1)$ with the usuual topology. It was also show that UHF algebras are continuous with respect to their natural parameter space of multiplicity sequences metrized by the Baire space. This was accomplished by introducing new quantum metrics on AF algebras equipped with faithful tracial state motivated by work of Christensen and Ivan in \cite{Christensen06}. Now, in \cite{Christensen06}, Christensen and Ivan did introduce quantum metrics on these infinite-dimensional algebras, but at the time it wasn't clear how to provide convergence of these algebras in any noncommutative analogue to the Gromov-Hausdorff distance, which is one reason why the quantum metrics of \cite{Aguilar-Latremoliere15} were introduced which are not defined using spectral triples. But, in an effort, to bring the realms of noncommutative geometry and noncommutative metric geometry closer, it is important to provide the convergence results of these infinite-dimensional algebras of \cite{Aguilar-Latremoliere15} with the quantum metrics induced by the spectral triples of \cite{Christensen06}, which is exactly what is accomplished in this article. 

The main hurdles to overcome in proving this arise from two issues that were circumvented by the quantum metrics introduced in \cite{Aguilar-Latremoliere15}. First, the spectral triples of \cite{Christensen06} are constructed using equivalence constants which are only provided by existence and not explicitly given, which cause an issue when providing continuous fields of $L$-seminorms as it is difficult to control these non-explicit constants. Second, providing continuous fields of $L$-seminorms provided by faithful tracial states is difficult when relying on convergence in various operator norms  given by different GNS represenations for each spectral triple rather than a fixed C*-norm. The first issue is overcome by an application of \cite[Lemma 3.9]{Aguilar24}, and the second issue is overcome by a generalization of \cite[Lemma 3.9]{Aguilar24}. Both of these issues are overcome in Section \ref{s:cond-fd}, and we apply these results in the last section to provide convergence of these infinite-dimensional algebras using quantum metrics induced by Christensen-Ivan spectral triples. We only define what we mean by a Leibniz compact quantum metric space as things can get quite overwhelming as more definitions are provided, but for references regarding quantum metric spaces, propinquity and propinquity in the context of AF algebras see \cite{Rieffel05, Latremoliere13, Aguilar-Latremoliere15, Latremoliere15b}.

\begin{definition}{\cite{Rieffel98a,Latremoliere13}}
    Let $\A$ be a unital C*-algebra with norm $\|\cdot\|_\A$ and unit $1_\A$. Let $L:\A \rightarrow [0,\infty)$ be a seminnorm (possibly taking value $\infty$) such that $\mathrm{dom}(L)=\{ a\in \A : L(a) <\infty\}$ is a dense *-subalgebra of $\A$. If
    \begin{enumerate}
        \item $L(a)=L(a^*)$ for every $a \in \A$,
        \item $\{a \in \A : L(a)=0\}=\C1_\A$,
        \item $L(ab)\leq \|a\|_\A L(b) +\|b\|_\A L(a)$ for every $a,b \in \A$,
        \item the metric on the state space $S(\A)$  of $\A$ defined for every $\phi, \psi\in S(\A)$ by
        \[
        mk_L(\phi, \psi)=\sup \{|\phi(a)-\psi(a)| : a \in \A, L(a) \leq 1\}
        \]
        metrizes the weak* topology,
    \end{enumerate}
    then we call $L$ an {\em $L$-seminorm} and $(\A, L)$ a {\em Leibniz compact quantum metric space}.
\end{definition}

\section{Finite-dimensional approximations and associated continuous fields of $L$-seminorms}\label{s:cond-fd}

In what follows, we use various results from the beginning of \cite[Section 2]{Christensen06} with some slightly different notation. 
 $\A=\overline{\cup_{n \in \N} A_n}^{\|\cdot\|_A}$ be a unital AF algebra, where $\A_0=\C1_\A$ equipped with a faithful tracial state $\tau$. Let $H_\tau$ denote the associated GNS Hilbert space with inner product defined for every $a,b \in H_\tau$ by \[
 \langle a,b\rangle_\tau =\tau(b^*a) 
 \]
 and associated norm $\|a\|_\tau=\sqrt{\langle a,a\rangle_\tau}$.  Since $\tau$ is faithful, we can canonically view $\A$ as a subspace (not necessarily closed) of $H_\tau$. Let 
 \[
 \pi_\tau : \A \longrightarrow B(H_\tau)
 \] be the associated GNS representation such that $\pi_\tau(a)(b)=ab$ for every $a,b \in \A$.

Let $n \in \N$, since $\A_n$ is finite dimensional, we have that $\A_n$ is a closed subspace of $H_\tau$. Let 
 \[
 P^\tau_n: H_\tau\rightarrow \A_n
 \]
 denote the orthogonal projection of $H_\tau$ onto $\A_n$ and define $Q^\tau_n=P^\tau_n-P^\tau_{n-1}$. Let 
 \[
 E^\tau_n: \A \rightarrow \A_n
 \]
 denote the restriction of $P^\tau_n$ to $\A$, and by \cite[Theorem 3.5]{Aguilar-Latremoliere15}, we have that $E^\tau_n$ is the unique $\tau$-preserving conditional expectation onto $\A_n$.

 Next, since $\A_{n+1}$ is finite dimensional, there exists a sharp $c^\tau_{n+1}>0 $ such that  
 \begin{equation}\label{eq:sharp-constants}
 \|a\|_\A \leq  c^\tau_{n+1} \cdot \|a\|_\tau
 \end{equation} for every $ a\in \A_{n+1}$.  Note that $c^\tau_{n+1}\geq 1$ since $\|\cdot\|_\tau \leq \|\cdot\|_\A$ on $\A$.

 We now prove a crucial fact about these constants.
 \begin{proposition}\label{p:constant-conv}
     Let $(\tau^n)_{n \in \N}$ be a sequence of faithful tracial states on $\A$ and let $\tau$ be a faithful tracial state on $\A$. If $(\tau^n)_{n \in \N}$ converges to $\tau$ in the weak* topology, then for every $N \in \N$, the sequence $(c^{\tau^n}_{N})_{n \in \N}$ converges to $c^{\tau}_{N}$ in the usual topology on $\R$. 
 \end{proposition}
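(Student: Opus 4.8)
The plan is to reinterpret the sharp constant $c^\tau_N$ as the reciprocal of a minimum of the $\tau$-norm taken over a \emph{fixed} compact sphere, and then to exploit the finite-dimensionality of $\A_N$ to promote weak* convergence of the states into uniform convergence of the associated norms. First I would record that, by the defining property \eqref{eq:sharp-constants}, the constant $c^\tau_N$ is the smallest number for which $\|a\|_\A \leq c^\tau_N \|a\|_\tau$ holds on $\A_N$, so that
\[
c^\tau_N = \sup\left\{ \frac{\|a\|_\A}{\|a\|_\tau} : a \in \A_N,\ a \neq 0 \right\} = \left( \min_{a \in S} \|a\|_\tau \right)^{-1},
\]
where $S = \{ a \in \A_N : \|a\|_\A = 1 \}$ is the unit sphere of $(\A_N,\|\cdot\|_\A)$. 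Since $\A_N$ is finite-dimensional, $S$ is compact; and since $\tau$ is faithful, $\|\cdot\|_\tau$ is a genuine norm on $\A_N$, so the minimum is attained and strictly positive, whence $c^\tau_N$ is finite. The same formula holds for each $\tau^n$.

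Next I would fix a complex-linear basis $e_1,\dots,e_d$ of $\A_N$ and write $a = \sum_i x_i e_i$ with coordinate vector $x = (x_i)$. Weak* convergence $\tau^n \to \tau$ means $\tau^n(b)\to\tau(b)$ for every fixed $b \in \A$; applying this to the finitely many elements $e_i^\ast e_j \in \A$ shows that the Gram matrices $G^{\tau^n}_{ij} = \tau^n(e_i^\ast e_j)$ converge entrywise, hence in operator norm, to $G^\tau_{ij} = \tau(e_i^\ast e_j)$. Writing $\|a\|_\tau^2 = x^\ast G^\tau x$, this yields the uniform estimate
\[
\sup_{a \in S} \left| \|a\|_{\tau^n}^2 - \|a\|_\tau^2 \right| \leq \opnorm{G^{\tau^n} - G^\tau} \cdot \sup_{a \in S} \sum_{i=1}^d |x_i|^2 \longrightarrow 0,
\]
the last supremum being finite because $S$ is compact and all norms on the finite-dimensional space $\A_N$ are equivalent. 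This is the step that turns the a priori merely pointwise convergence of the \emph{varying} norms $\|\cdot\|_{\tau^n}$ into uniform convergence on $S$.

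Finally, uniform convergence on the compact set $S$ passes to the minima: one has $\left| \min_S \|\cdot\|_{\tau^n}^2 - \min_S \|\cdot\|_\tau^2 \right| \leq \sup_S \big|\,\|\cdot\|_{\tau^n}^2 - \|\cdot\|_\tau^2\,\big| \to 0$. Setting $m(\tau) = \min_S \|\cdot\|_\tau > 0$, I would conclude that $\min_S \|\cdot\|_{\tau^n} \to m(\tau)$, and then, since $t \mapsto t^{-1}$ is continuous at $m(\tau) > 0$, that $c^{\tau^n}_N = \left(\min_S \|\cdot\|_{\tau^n}\right)^{-1} \to m(\tau)^{-1} = c^\tau_N$. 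The main obstacle, and the only genuinely delicate point, is ensuring the strict positivity of the limiting minimum $m(\tau)$: this is precisely where faithfulness of the \emph{limit} state $\tau$ enters, for without it $m(\tau)$ could vanish, the reciprocal would be discontinuous, and the convergence of the constants could fail. Everything else is finite-dimensional linear algebra together with the compactness of $S$.
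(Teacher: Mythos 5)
Your proof is correct, but it takes a genuinely different route from the paper's. The paper disposes of this proposition in a single line, observing that $\|\cdot\|_\tau$ is a Frobenius--Rieffel norm and invoking \cite[Proposition 3.10]{Aguilar24} applied to \cite[Proposition 3.6]{Aguilar24}; those external results convert weak* convergence of the states into uniform convergence of the associated norms on compact sets, and then into convergence of the sharp equivalence constants. You reconstruct that entire pipeline from first principles: the variational identity $c^\tau_N=\left(\min_{a\in S}\|a\|_\tau\right)^{-1}$ over the $\|\cdot\|_\A$-unit sphere $S$ of $\A_N$; the Gram-matrix estimate, which tests weak* convergence only on the finitely many elements $e_i^*e_j$ and upgrades it to uniform convergence of $\|\cdot\|_{\tau^n}^2$ on $S$; and the inequality $\left|\min_S f-\min_S g\right|\leq\sup_S\left|f-g\right|$ to pass to minima, followed by continuity of $t\mapsto t^{-1}$ at $m(\tau)>0$. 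All three steps are sound, and you correctly isolate the one delicate point: faithfulness of the limit state is what makes $m(\tau)>0$ so that inversion is continuous (faithfulness of each $\tau^n$ is likewise what makes each $c^{\tau^n}_N$ finite, as your remark that ``the same formula holds for each $\tau^n$'' implicitly requires). What the citation buys the paper is brevity and coherence, since the same two propositions of \cite{Aguilar24} are reused later, e.g.\ in the proof of Theorem \ref{t:cont-field-L}; what your version buys is self-containment --- no Frobenius--Rieffel machinery beyond the definition of $\|\cdot\|_\tau$ --- and a little extra generality, since nothing in your argument uses traciality of the states, only their faithfulness.
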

 \begin{proof}
     This is just \cite[Proposition 3.10]{Aguilar24} applied to \cite[Proposition 3.6]{Aguilar24} since norm $\|\cdot\|_\tau$ is a Frobenius-Rieffel norm.
 \end{proof}

 Let $(\beta(n))_{n \in \N}$ be a summable sequence of positive reals.  Set
 \[
 a^\tau_{\beta, n+1}=\frac{c^\tau_{n+1}}{\beta_{n+1}}.
 \]

Next, we state a main result from \cite{Christensen06}.
\begin{theorem}{\! \cite[Theorem 2.1]{Christensen06}}\label{t:CI-thm}
     Let $(\beta(n))_{n \in \N}$ be a summable sequence of positive reals. Using the above setting, we have that 
     \[
     D^\tau_\beta=\sum_{n=1}^\infty a^\tau_{\beta,n} Q^\tau_n
     \]
     defines an unbounded self-adjoint operator on $H_\tau$. Furthermore, if we define 
     \[
     L^\tau_\beta(a)=\|[D^\tau_\beta, \pi_\tau(a)]\|_{B(H_\tau)}
     \]
     for every $a\in \A$ such that $[D^\tau_\beta, \pi_\tau(a)]$ extends to a bounded operator on $H_\tau$ denoted by $[D^\tau_\beta, \pi_\tau(a)]$, and set $ L^\tau_\beta(a)=\infty$ if not, then
     \[
     (\A, L^\tau_\beta)
     \]
     is a Leibniz compact quantum metric space, and for every $n\in \N$, $(\A_n, L^\tau_\beta)$ is a Leibniz compact quantum metric space such that  $\mathrm{dom}(L^\tau_\beta(a))\cap\A_n=\A_n$.
\end{theorem}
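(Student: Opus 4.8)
The plan is to exploit that $D^\tau_\beta$ is \emph{diagonal} with respect to the orthogonal decomposition $H_\tau=\bigoplus_{n\geq 0}Q^\tau_n H_\tau$, where each $Q^\tau_n H_\tau=\A_n\ominus\A_{n-1}$ is finite-dimensional and $Q^\tau_0=P^\tau_0$ is the rank-one projection onto $\C1_\A$. First I would record that $D^\tau_\beta$ acts as multiplication by the real scalar $a^\tau_{\beta,n}$ on $Q^\tau_n H_\tau$ (with the convention $a^\tau_{\beta,0}=0$, since the defining sum starts at $n=1$); the standard theory of diagonal operators then makes $D^\tau_\beta$ self-adjoint on its natural domain $\{\xi:\sum_n (a^\tau_{\beta,n})^2\|Q^\tau_n\xi\|_\tau^2<\infty\}$. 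It is unbounded because $a^\tau_{\beta,n}=c^\tau_n/\beta_n\geq 1/\beta_n\to\infty$, using $c^\tau_n\geq 1$ and the summability (hence nullity) of $(\beta_n)$; in fact the eigenvalues tend to infinity with finite multiplicities, so $D^\tau_\beta$ even has compact resolvent.

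The heart of the argument is a single block estimate. Writing $\pi_\tau(a)$ in block form against the decomposition above, one computes
\[
Q^\tau_m\,[D^\tau_\beta,\pi_\tau(a)]\,Q^\tau_n=(a^\tau_{\beta,m}-a^\tau_{\beta,n})\,Q^\tau_m\pi_\tau(a)Q^\tau_n,
\]
so that $|a^\tau_{\beta,m}-a^\tau_{\beta,n}|\,\|Q^\tau_m\pi_\tau(a)Q^\tau_n\|\leq L^\tau_\beta(a)$ for all $m,n$. Specializing to $n=0$, $m=k\geq 1$ and feeding in the cyclic vector $1$ (so that $Q^\tau_k a=Q^\tau_k\pi_\tau(a)Q^\tau_0 1$ and $\|1\|_\tau=1$) yields $\|Q^\tau_k a\|_\tau\leq \beta_k L^\tau_\beta(a)/c^\tau_k$; since $Q^\tau_k a\in\A_k$, the sharp constant \eqref{eq:sharp-constants} absorbs the $c^\tau_k$ and gives the clean inequality
\[
\|Q^\tau_k a\|_\A\leq \beta_k\,L^\tau_\beta(a)\qquad(k\geq 1).
\]
This is exactly the point where the choice $a^\tau_{\beta,k}=c^\tau_k/\beta_k$ pays off, and it is the estimate on which everything else rests.

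From here the four axioms follow. Symmetry $L^\tau_\beta(a^*)=L^\tau_\beta(a)$ is immediate from self-adjointness of $D^\tau_\beta$, which gives $[D^\tau_\beta,\pi_\tau(a^*)]=-[D^\tau_\beta,\pi_\tau(a)]^*$; the Leibniz inequality follows from the commutator product rule together with $\|\pi_\tau(a)\|_{B(H_\tau)}=\|a\|_\A$ (faithfulness of $\tau$). If $L^\tau_\beta(a)=0$ the block estimate forces $Q^\tau_k a=0$ for every $k\geq 1$, whence $a=Q^\tau_0 a\in\C1_\A$, giving axiom (2). For the metrization axiom (4) I would invoke Rieffel's total-boundedness criterion \cite{Rieffel98a}: normalizing representatives by $\tau(a)=0$ (i.e. $Q^\tau_0 a=0$), the block estimate gives on $\{L^\tau_\beta\leq 1\}$ both a uniform bound $\|a\|_\A\leq\sum_{k\geq1}\beta_k$ and the truncation error $\|a-E^\tau_n(a)\|_\A=\|\sum_{k>n}Q^\tau_k a\|_\A\leq L^\tau_\beta(a)\sum_{k>n}\beta_k$. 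Since $\sum_{k>n}\beta_k\to 0$ and $E^\tau_n$ has finite-dimensional range, the normalized unit ball is approximated in $\|\cdot\|_\A$ to arbitrary precision by bounded subsets of finite-dimensional spaces, hence is totally bounded; this is the step demanding the most care.

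Finally, for the finite-dimensional statement I would show that for $a\in\A_n$ the operator $\pi_\tau(a)$ has \emph{finite bandwidth}: using $\tau$-invariance of the conditional expectations one checks, for $\xi\in Q^\tau_m H_\tau$ and $\eta\in\A_{m-1}$ with $m>n$, that $\langle a\xi,\eta\rangle_\tau=\langle\xi,a^*\eta\rangle_\tau=0$, so $\pi_\tau(a)$ leaves each $Q^\tau_m H_\tau$ invariant for $m>n$ and carries $\A_n$ into itself. Hence every off-diagonal block of $\pi_\tau(a)$ lives inside the finite-dimensional subspace $\A_n$, so $[D^\tau_\beta,\pi_\tau(a)]$ is supported on $\A_n$ and is automatically bounded, proving $\mathrm{dom}(L^\tau_\beta)\cap\A_n=\A_n$. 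The axioms for $(\A_n,L^\tau_\beta)$ are then inherited from the ambient ones: symmetry, Leibniz and axiom (2) restrict verbatim, and metrization of the finite-dimensional state space of $\A_n$ is automatic once the induced seminorm on $\A_n/\C1_\A$ is a genuine norm, which axiom (2) provides.
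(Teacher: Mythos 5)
Note first that the paper itself contains no proof of Theorem \ref{t:CI-thm}: it is imported verbatim from \cite[Theorem 2.1]{Christensen06}, so the only meaningful comparison is with Christensen and Ivan's original argument. Your proposal is correct and is essentially that argument: the diagonal structure of $D^\tau_\beta$ over the finite-dimensional blocks $Q^\tau_n H_\tau$, the key estimate $\|Q^\tau_k a\|_\A \leq \beta_k L^\tau_\beta(a)$ obtained by applying the commutator to the cyclic vector and absorbing $c^\tau_k$ via Expression \eqref{eq:sharp-constants}, and Rieffel's total-boundedness criterion applied to the truncations $E^\tau_n$, with the commutation of $\pi_\tau(a)$, $a\in\A_n$, with the tail projections giving the finite-dimensional statement (the same fact the paper reuses in Proposition \ref{p:f-dim-comm}). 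The only elided step worth flagging is the verification that $\sum_{k\geq 1} Q^\tau_k a$ converges to $a - \tau(a)1_\A$ in $\|\cdot\|_\A$ and not merely in $\|\cdot\|_\tau$ when $L^\tau_\beta(a)<\infty$; this follows routinely from absolute convergence of the series in $\|\cdot\|_\A$ together with $\|\cdot\|_\tau \leq \|\cdot\|_\A$, and it is what legitimizes both your uniform bound and your truncation-error estimate.
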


The following fact is stated after  \cite[Expression (4.5)]{Latremoliere23}, but we provide a proof here.

\begin{proposition}\label{p:f-dim-comm}
    Using the setting of Theorem \ref{t:CI-thm}, we have for every $n \in \N$ and for every $a \in \A_n$  that 
    \[
    L^\tau_\beta(a)=\|[D^\tau_\beta, \pi_\tau(a)]\|_{B(\A^\tau_n)},
    \]
    where $\A_n^\tau=\A_n$ but for $B(\A^\tau_n)$ we are considering bounded operators with respect to the norm $\|\cdot\|_\tau$ on $\A_n$.
\end{proposition}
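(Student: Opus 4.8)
The plan is to show that, for $a \in \A_n$, the bounded operator $[D^\tau_\beta, \pi_\tau(a)]$ is block-diagonal for the orthogonal decomposition $H_\tau = \A_n \oplus \A_n^{\perp}$: it maps $\A_n$ into $\A_n$ and vanishes identically on $\A_n^{\perp}$. Granting this, the proposition is immediate, since the $B(H_\tau)$-operator norm of a block-diagonal operator is the largest of the norms of its blocks, and here the only nonzero block is the one in $B(\A_n^\tau)$. The single fact driving everything is the commutation relation $\pi_\tau(a) P^\tau_k = P^\tau_k \pi_\tau(a)$ for all $k \geq n$.

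To prove this commutation, I would use that $P^\tau_k$ restricts on $\A$ to the unique $\tau$-preserving conditional expectation $E^\tau_k$ onto $\A_k$. Since $a \in \A_n \subseteq \A_k$ whenever $k \geq n$, the $\A_k$-bimodule property of a conditional expectation yields $E^\tau_k(a\xi) = a\,E^\tau_k(\xi)$ for all $\xi \in \A$; that is, $P^\tau_k \pi_\tau(a)$ and $\pi_\tau(a)P^\tau_k$ agree on the dense subspace $\A$ of $H_\tau$, hence everywhere since both are bounded. It follows that $\pi_\tau(a)$ commutes with $Q^\tau_k = P^\tau_k - P^\tau_{k-1}$ for every $k \geq n+1$, and that $\pi_\tau(a)$ preserves both $\A_n = P^\tau_n H_\tau$ and $\A_n^{\perp} = (1 - P^\tau_n)H_\tau$.

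With this in hand I would verify the two halves of the block structure separately. For $\xi \in \A_n$, one has $\pi_\tau(a)\xi = a\xi \in \A_n$, while $Q^\tau_k \xi = 0$ for $k > n$ gives $D^\tau_\beta \xi = \sum_{k \leq n} a^\tau_{\beta,k} Q^\tau_k \xi \in \A_n$; hence $[D^\tau_\beta, \pi_\tau(a)]$ maps $\A_n$ into $\A_n$. For the complement, note that $\A_n^{\perp}$ is the closed span of the spaces $Q^\tau_k H_\tau$ with $k \geq n+1$. On a finite combination $\xi = \sum_{k=n+1}^{M}\xi_k$ with $\xi_k \in Q^\tau_k H_\tau$, each $\xi_k$ is a $D^\tau_\beta$-eigenvector of eigenvalue $a^\tau_{\beta,k}$, and since $\pi_\tau(a)$ commutes with each such $Q^\tau_k$ the vector $\pi_\tau(a)\xi_k$ stays in $Q^\tau_k H_\tau$; a direct comparison then gives $D^\tau_\beta \pi_\tau(a)\xi = \pi_\tau(a) D^\tau_\beta \xi$. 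By density and boundedness, $[D^\tau_\beta, \pi_\tau(a)]$ vanishes on all of $\A_n^{\perp}$, completing the block-diagonal picture and therefore the proof.

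I expect the only genuinely delicate point to be the manipulation of the commutator in the presence of the unbounded operator $D^\tau_\beta$. This is controlled by noting that every vector appearing above---$\xi \in \A_n$, the eigenvectors $\xi_k \in Q^\tau_k H_\tau$, and their images under $\pi_\tau(a)$---lies in some finite-dimensional $\A_m$, on which $D^\tau_\beta$ restricts to the bounded operator $\sum_{k \leq m} a^\tau_{\beta,k} Q^\tau_k$, so all computations take place inside $\dom{D^\tau_\beta}$; the extension from these dense subspaces to $\A_n^{\perp}$ is then a routine density argument using that $[D^\tau_\beta, \pi_\tau(a)]$ is bounded for $a \in \A_n$ by Theorem \ref{t:CI-thm}.
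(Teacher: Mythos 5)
Your proposal is correct and follows essentially the same route as the paper: both arguments rest on the commutation $\pi_\tau(a)P^\tau_k = P^\tau_k\pi_\tau(a)$ for $k\geq n$ (equivalently, the bimodule property of the conditional expectations), and both conclude that $[D^\tau_\beta,\pi_\tau(a)]$ coincides with its compression $P^\tau_n[D^\tau_\beta,\pi_\tau(a)]P^\tau_n$, from which the norm identity follows. Your phrasing via block-diagonality for $H_\tau=\A_n\oplus\A_n^\perp$ and vanishing on the spectral subspaces $Q^\tau_kH_\tau$, $k\geq n+1$, is the same mechanism as the paper's computation, with the added (and welcome) explicit care about the domain of the unbounded operator and the density argument on $\A_n^\perp$, which the paper leaves implicit.
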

\begin{proof}
    Let $n \in \N$. Let $a \in \A_n$. By definition, we have that 
    \[
    \|[D^\tau_\beta, \pi_\tau(a)]\|_{B(\A_n)}\leq L^\tau_\beta(a).
    \]
    Next, let  $k \in \{1,2,\ldots, n\}$. We have since $\pi_\tau(a)$ commutes with $P^\tau_n$ by the proof of \cite[Theorem 2.1]{Christensen06} or the proof of Step 1 of \cite[Theorem 3.5]{Aguilar-Latremoliere15}. Moreover, $P_nP_k=P_kP_n=P_k$ and $P_nP_{k-1}=P_{k-1}P_n=P_{k-1}$ by construction. Thus
    \begin{align*}
    P^\tau_n[Q^\tau_k,\pi_\tau(a)]P^\tau_n& = P^\tau_n(Q^\tau_k\pi_\tau(a)-\pi_\tau(a)  Q^\tau_k)P^\tau_n\\
    & = P^\tau_n((P^\tau_k-P^\tau_{k-1})\pi_\tau(a)-\pi_\tau(a) (P^\tau_k-P^\tau_{k-1}))P^\tau_n\\
    & = (P^\tau_k-P^\tau_{k-1})\pi_\tau(a)P^\tau_n-P^\tau_n\pi_\tau(a)  (P^\tau_k-P^\tau_{k-1})\\
    & = (P^\tau_k-P^\tau_{k-1})P^\tau_n\pi_\tau(a)-\pi_\tau(a) P^\tau_n (P^\tau_k-P^\tau_{k-1})\\
    & = (P^\tau_k-P^\tau_{k-1})\pi_\tau(a)-\pi_\tau(a) (P^\tau_k-P^\tau_{k-1})\\
    & = Q^\tau_k\pi_\tau(a)-\pi_\tau(a)  Q^\tau_k\\
    & = [Q^\tau_k,\pi_\tau(a)]
\end{align*}    
Thus
    \begin{align*}
        [D^\tau_\beta, \pi_\tau(a)]& = \sum_{k=1}^n a^\tau_{\beta, k}[Q^\tau_k,\pi_\tau(a)]\\
        & = \sum_{k=1}^n a^\tau_{\beta, k}P^\tau_n[Q^\tau_k,\pi_\tau(a)]P^\tau_n\\
        & =P^\tau_n [D^\tau_\beta, \pi_\tau(a)]P^\tau_n.
    \end{align*}
    Hence, since $(P^\tau_n)^2=P^\tau_n$ and $P^\tau_n$ is contractive with respect to $\|\cdot\|_\tau$ and $P^\tau_n(h) \in \A_n$ for every $ h\in H_\tau$, we have 
    \begin{align*}
        L^\tau_\beta(a)& =\| P^\tau_n [D^\tau_\beta, \pi_\tau(a)]P^\tau_n \|_{B(H_\tau)}\\
        & = \sup \left\{\| P^\tau_n [D^\tau_\beta, \pi_\tau(a)]P^\tau_n (h)\|_\tau : h \in H_\tau, \|h\|_\tau \leq 1 \right\}\\
        & =  \sup \left\{\| P^\tau_n [D^\tau_\beta, \pi_\tau(a)](P^\tau_n)^2 (h)\|_\tau : h \in H_\tau, \|h\|_\tau \leq 1 \right\}\\
        & =  \sup \left\{\| P^\tau_n [D^\tau_\beta, \pi_\tau(a)]P^\tau_n(P^\tau_n (h))\|_\tau : h \in H_\tau, \|h\|_\tau \leq 1 \right\}\\
        & =  \sup \left\{\|  [D^\tau_\beta, \pi_\tau(a)] (P^\tau_n (h))\|_\tau : h \in H_\tau, \|h\|_\tau \leq 1 \right\}\\
        & \leq  \sup \left\{\|  [D^\tau_\beta, \pi_\tau(a)] \|_\tau : h \in \A_n, \|h\|_\tau \leq 1 \right\}\\
        & = \|[D^\tau_\beta, \pi_\tau(a)]\|_{B(\A_n)}.
    \end{align*}
Therefore $ \|[D^\tau_\beta, \pi_\tau(a)]\|_{B(\A_n)}\leq L^\tau_\beta(a)\leq \|[D^\tau_\beta, \pi_\tau(a)]\|_{B(\A_n)}$ as desired.
\end{proof}
With this we can provide finite-dimensional approximations, which has been conveniently already proven   in  \cite{Latremoliere23}.

\begin{theorem}{\!  \cite[Theorem 4.8]{Latremoliere23}}\label{t:fd-approx}
    For every $n \in \N$, it holds that 
    \[
    \qpropinquity{}((\A, L^\tau_\beta), (\A_n, L^\tau_\beta))\leq \sum_{k=n}^\infty \beta_k,
    \]
    where $\qpropinquity{}$ is the quantum Gromov-Hausdorff propinquity of \cite{Latremoliere13}.
\end{theorem}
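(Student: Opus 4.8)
The plan is to bound $\qpropinquity{}$ by the length of a single explicit bridge in the sense of Latr\'emoli\`ere, exploiting that $\A_n$ sits inside $\A$. I would take $\gamma_n=(\A,1_\A,\mathrm{id}_\A,\iota_n)$, where $\iota_n\colon\A_n\hookrightarrow\A$ is the inclusion and the pivot is the unit $1_\A$. Because the pivot is the unit and one leg is the identity of the ambient algebra, the height of $\gamma_n$ vanishes: the $1$-level set of the pivot is all of $\StateSpace(\A)$, its image under $\mathrm{id}_\A$ is $\StateSpace(\A)$, and its image under $\iota_n$ is exactly $\StateSpace(\A_n)$ since every state of $\A_n$ extends to $\A$ (for instance by precomposition with $E^\tau_n$). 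Everything therefore reduces to the reach of $\gamma_n$, which with this pivot is the $\|\cdot\|_\A$-Hausdorff distance between $\{a\in\A:L^\tau_\beta(a)\le1\}$ and $\{b\in\A_n:L^\tau_\beta(b)\le1\}$; to control it I need two ingredients: (i) a decay estimate $\|a-E^\tau_n(a)\|_\A\le\bigl(\sum_{k>n}\beta_k\bigr)L^\tau_\beta(a)$, and (ii) the contraction property $L^\tau_\beta(E^\tau_n(a))\le L^\tau_\beta(a)$.

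For (i) the key idea is to evaluate the commutator on the cyclic vector $1_\A$; it suffices to treat $a$ in the dense subalgebra $\bigcup_m\A_m$, where $a\in\dom{D^\tau_\beta}$ and all sums below are finite. Since $1_\A\in\A_0$ we have $D^\tau_\beta(1_\A)=0$, hence $[D^\tau_\beta,\pi_\tau(a)](1_\A)=D^\tau_\beta(a)=\sum_{k\ge1}a^\tau_{\beta,k}Q^\tau_k(a)$. The ranges of the $Q^\tau_k$ are pairwise $\|\cdot\|_\tau$-orthogonal and $\|1_\A\|_\tau=1$, so Pythagoras and the operator-norm bound give $\sum_k(a^\tau_{\beta,k})^2\|Q^\tau_k(a)\|_\tau^2=\|[D^\tau_\beta,\pi_\tau(a)](1_\A)\|_\tau^2\le L^\tau_\beta(a)^2$. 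In particular $a^\tau_{\beta,k}\|Q^\tau_k(a)\|_\tau\le L^\tau_\beta(a)$, and since $Q^\tau_k(a)\in\A_k$ the sharp-constant inequality \eqref{eq:sharp-constants} together with $a^\tau_{\beta,k}=c^\tau_k/\beta_k$ produces the decisive cancellation $\|Q^\tau_k(a)\|_\A\le c^\tau_k\|Q^\tau_k(a)\|_\tau\le\beta_k L^\tau_\beta(a)$. As $\sum_{k>n}Q^\tau_k(a)$ is then a finite sum equal to $a-E^\tau_n(a)$, the triangle inequality yields $\|a-E^\tau_n(a)\|_\A\le\bigl(\sum_{k>n}\beta_k\bigr)L^\tau_\beta(a)$.

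For (ii) I would compute $[D^\tau_\beta,\pi_\tau(E^\tau_n(a))]$ on $\A_n$ and identify it with the restriction of $P^\tau_n[D^\tau_\beta,\pi_\tau(a)]P^\tau_n$: using that $D^\tau_\beta$ commutes with every $P^\tau_m$ and the bimodule identity $E^\tau_n(a)b=E^\tau_n(ab)$ for $b\in\A_n$, one checks that both operators send each $b\in\A_n$ to $D^\tau_\beta E^\tau_n(ab)-E^\tau_n(a\,D^\tau_\beta b)$. Proposition \ref{p:f-dim-comm} evaluates $L^\tau_\beta$ on $\A_n$ through the $\|\cdot\|_\tau$-operator norm on $B(\A^\tau_n)$, and since $P^\tau_n$ is a $\|\cdot\|_\tau$-contraction this gives $L^\tau_\beta(E^\tau_n(a))\le L^\tau_\beta(a)$. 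Granting (i) and (ii), the reach of $\gamma_n$ is at most $\sum_{k>n}\beta_k$: I pair each $a$ in the $\A$-ball with $b=E^\tau_n(a)$, which lies in the $\A_n$-ball by (ii) and satisfies $\|a-b\|_\A\le\sum_{k>n}\beta_k$ by (i), and pair each $b\in\A_n$ in its ball with itself; this passes from the dense core to all admissible elements by density and lower semicontinuity of $L^\tau_\beta$. Together with the vanishing height, the length of $\gamma_n$ is at most $\sum_{k>n}\beta_k$, whence $\qpropinquity{}((\A,L^\tau_\beta),(\A_n,L^\tau_\beta))\le\sum_{k>n}\beta_k\le\sum_{k=n}^\infty\beta_k$. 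The main obstacle is (i): it is the only step using the precise form of $D^\tau_\beta$, and it succeeds exactly because the non-explicit sharp constants $c^\tau_k$ cancel against the coefficients $a^\tau_{\beta,k}$, neutralizing the difficulty flagged in the introduction.
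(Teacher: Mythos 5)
The paper offers no proof of this statement at all: Theorem \ref{t:fd-approx} is imported verbatim from \cite[Theorem 4.8]{Latremoliere23}, so there is no internal argument to compare yours against. Judged on its own merits, your proof is correct in its essentials, and it is in substance the standard argument behind the cited result (and behind \cite[Theorem 3.5]{Aguilar-Latremoliere15} in the non-spectral-triple setting): a single bridge $(\A,1_\A,\mathrm{id}_\A,\iota_n)$ with unit pivot has vanishing height; the reach from $\A_n$ into $\A$ is zero because the seminorm on $\A_n$ is the restriction of $L^\tau_\beta$; and the reach from $\A$ into $\A_n$ is controlled by pairing $a$ with $E^\tau_n(a)$, using exactly your estimates (i) and (ii). Your (i) is the Christensen--Ivan estimate, and the cancellation of the non-explicit sharp constants $c^\tau_k$ against the eigenvalue gaps $a^\tau_{\beta,k}=c^\tau_k/\beta_k$ is precisely the point of Christensen and Ivan's choice of eigenvalues; your (ii) is the $L$-contractivity of the $\tau$-preserving conditional expectation, proved the same way the paper proves Proposition \ref{p:f-dim-comm}.

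One step deserves tightening. You establish (i) only on the core $\bigcup_m\A_m$ and then invoke ``density and lower semicontinuity of $L^\tau_\beta$'' to cover all of $\dom{L^\tau_\beta}$. Lower semicontinuity goes the wrong way for this: it controls the seminorm of a limit, but it does not manufacture, for an arbitrary $a$ with $L^\tau_\beta(a)\leq 1$, core approximants $a_j\to a$ in $\|\cdot\|_\A$ with $L^\tau_\beta(a_j)\leq 1$, which is what your reduction needs. The repair is already inside your own argument: your computation for (ii) never uses that $a$ lies in the core (only that $b$ and $D^\tau_\beta b$ lie in $\A_n$, that $D^\tau_\beta$ commutes with $P^\tau_m$, and the bimodule identity $E^\tau_n(a)b=E^\tau_n(ab)$), so (ii) holds for every $a\in\dom{L^\tau_\beta}$. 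Then $a_m=E^\tau_m(a)$ are core elements with $L^\tau_\beta(a_m)\leq 1$ and $\|a-a_m\|_\A\to 0$ (conditional expectations are norm-contractive and fix each $\A_{m_0}$ for $m\geq m_0$), and since $E^\tau_n(a_m)=E^\tau_n(a)$ for $m\geq n$, letting $m\to\infty$ in $\|a-E^\tau_n(a)\|_\A\leq\|a-a_m\|_\A+\bigl(\sum_{k>n}\beta_k\bigr)L^\tau_\beta(a_m)$ yields (i) for all of $\dom{L^\tau_\beta}$, with no semicontinuity needed. (Alternatively, under the convention that $a\in\dom{L^\tau_\beta}$ means $\pi_\tau(a)$ preserves $\dom{D^\tau_\beta}$ with bounded commutator, one has $a=\pi_\tau(a)1_\A\in\dom{D^\tau_\beta}$ and your Pythagoras argument in (i) runs verbatim for general $a$, the sum now being infinite but convergent.) With that patch your argument is complete.
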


The main examples of AF algebras in this article, Effros-Shen algebras and UHF algebras, are given in the setting of inductive limits of finite-dimensional C*-algebras. Thus, we introduce notation to prove results in this setting.

Let $(\B_n, \alpha_n)_{n \in \N}$ be an inductive sequence of C*-algebras (see \cite[Section 6.1]{Murphy90}) such that:
\begin{enumerate} 
\item $\B_0=\C$ and $\B_n=\bigoplus_{k=1}^{n_n} \M_{d_{n,k}}(\C)$ for all $n \in \N\setminus \{0\}$, where $d_{n,k} \in \N \setminus \{0\}$ for each $n \in \N \setminus \{0\}$ and $k \in \{1, 2, \ldots, n_n\}$;
\item   $\alpha_n :\B_n \rightarrow \B_{n+1}$ is a unital *-monomorphism for all $n \in \N$;
\item   the inductive limit $\A=\underrightarrow{\lim} \ (\B_n, \alpha_n)_{n \in \N}$ is equipped with a faithful tracial state $\tau$.
  \end{enumerate}
  For each $n \in \N$, let $  \alpha^{(n)} : \B_n \rightarrow \A$ be the canonical unital *-monomorphism satisfying 
  \begin{equation} 
  \alpha^{(n+1)}\circ \alpha_n=\alpha^{(n)},
  \end{equation}
  and if for each $k\in \{1,2,\ldots, n-1\}$, we define
  \[
  \alpha_{k,n}=\alpha_n\circ \alpha_{n-1} \circ \cdots \alpha_k,
  \] then inductively, we have  
  \begin{equation}\label{eq:ind-lim-maps}
       \alpha^{(n+1)}\circ \alpha_{k,n}=\alpha^{(k)}
  \end{equation}
  
  Note that $\A=\overline{\cup_{n \in \N}\alpha^{(n)}(\B_n)}^{\|\cdot\|_\A}$ and $\alpha^{(n)}(\B_n) \subseteq \alpha^{(n+1)}(\B_{n+1})$ and $\alpha^{(0)}(\B_0)=\C1_\A$ (see \cite[Section 6.1]{Murphy90}). So, for each $n \in \N$,
  set
  \[
  \A_n=\alpha^{(n)}(\B_n).
  \]
  As above, for each $n \in \N$, let \[E^\tau_n :\A \rightarrow  \A_n\]
  denote the unique $\tau$-preserving faithful conditional expectation onto $\A_n.$ For each $n \in \N$, let
\begin{equation}\label{eq:fd-trace}
\tau_n=\tau \circ \alpha^{(n)},
\end{equation}
which is a faithful tracial state on  $\B_n$ and let $\pi_{\tau_n}$ denote the associated GNS representation. Let  $k \in \{0, 1, \ldots, n+1\}$ let \[E^{\tau_{n+1}}_{n+1,k}: \B_{n+1} \rightarrow \alpha_{k,n}(\B_k)\]
be the unique $\tau_{n+1}$-preserving faithful conditional expectation onto $\alpha_{k,n}(\B_k)$. Define
\[
Q^{\tau_{n+1}}_{n+1,k}=E^{\tau_{n+1}}_{n+1,k}-E^{\tau_{n+1}}_{n+1,k-1}
\]
and let 
\[
D^{\tau_{n+1}}_\beta=\sum_{k=1}^{n+1} a^\tau_{\beta,k}Q^{\tau_{n+1}}_{n+1,k}.
\]
For every $a \in \B_{n+1}$, define
\begin{equation}\label{eq:ind-lim-fd-lip}
L^{\tau_{n+1}}_\beta(a)=\|[D^{\tau_{n+1}}_\beta, \pi_{\tau_{n+1}}(a)]\|_{B(\B^{\tau_{n+1}}_{n+1})}.
\end{equation}
By finite dimensionality, we have that 
\[
(\B_{n+1},L^{\tau_{n+1}}_\beta )
\]
is a Leibniz compact quantum metric space. We can now add to Proposition \ref{p:f-dim-comm} in the inductive limit setting.

\begin{theorem}\label{t:ind-lim-comm}
    Let $n \in \N$. It holds that \[
    L^\tau_\beta\circ \alpha^{(n)}(a)= \sup \{ \|[D^\tau_\beta, \pi_\tau(\alpha^{(n)}(a))](\alpha^{(n)}(b))  \|_\tau: b \in \B_n, \|b\|_{\tau_n} \leq 1\}=L^{\tau_n}_\beta(a)
    \]
    for every $a \in \B_n$.
\end{theorem}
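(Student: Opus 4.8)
The plan is to transport the entire construction on $\A_n \subseteq H_\tau$ back to the finite-dimensional GNS space $H_{\tau_n}$ via the map $\alpha^{(n)}$, which I claim is a Hilbert-space isometry. Concretely, since $\tau_n = \tau \circ \alpha^{(n)}$ and $\alpha^{(n)}$ is a unital $*$-monomorphism, for $b \in \B_n$ one computes
\[
\|\alpha^{(n)}(b)\|_\tau^2 = \tau\big((\alpha^{(n)}(b))^\ast \alpha^{(n)}(b)\big) = \tau\big(\alpha^{(n)}(b^\ast b)\big) = \tau_n(b^\ast b) = \|b\|_{\tau_n}^2,
\]
so that $U := \alpha^{(n)}|_{\B_n} \colon H_{\tau_n} \to H_\tau$ is an isometry with range $\A_n$; here $H_{\tau_n} = \B_n$ as a Hilbert space since $\B_n$ is finite-dimensional and $\tau_n$ is faithful. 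In particular $\{\alpha^{(n)}(b) : b\in\B_n,\ \|b\|_{\tau_n}\leq 1\}$ is exactly the unit ball of $(\A_n, \|\cdot\|_\tau)$, so the first equality in the statement is immediate from Proposition \ref{p:f-dim-comm} applied to $\alpha^{(n)}(a) \in \A_n$, once one unfolds the operator norm $\|\cdot\|_{B(\A^\tau_n)}$ as a supremum over that unit ball.

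For the second equality I would first record three intertwining relations for $U$. Multiplicativity of $\alpha^{(n)}$ gives $\pi_\tau(\alpha^{(n)}(a))\,U = U\,\pi_{\tau_n}(a)$ for all $a \in \B_n$. Next, using $\alpha^{(n)}\circ\alpha_{k,n-1} = \alpha^{(k)}$ from \eqref{eq:ind-lim-maps}, the subalgebra $\alpha_{k,n-1}(\B_k) \subseteq \B_n$ is carried by $\alpha^{(n)}$ onto $\A_k$. I then claim the key relation
\[
\alpha^{(n)} \circ E^{\tau_n}_{n,k} = E^\tau_k \circ \alpha^{(n)} \quad\text{on } \B_n,
\]
equivalently $U\,E^{\tau_n}_{n,k} = P^\tau_k\,U$ (recall $E^\tau_k = P^\tau_k|_{\A}$ and $\A_k \subseteq \A_n$). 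To see this, transport $E^{\tau_n}_{n,k}$ through the $*$-isomorphism $\alpha^{(n)}|_{\B_n}$ to obtain a map $\A_n \to \A_k$; it is a unital, completely positive, $\A_k$-bimodular projection that preserves $\tau|_{\A_n}$ (the last point because $E^{\tau_n}_{n,k}$ is $\tau_n$-preserving and $\tau\circ\alpha^{(n)} = \tau_n$). By uniqueness of the trace-preserving conditional expectation onto a subalgebra of a finite-dimensional algebra with faithful trace, this transported map coincides with $E^\tau_k|_{\A_n}$, which is the content of the claim. Subtracting the relations for $k$ and $k-1$ yields $U\,Q^{\tau_n}_{n,k} = Q^\tau_k\,U$.

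With these in hand, the computation in the proof of Proposition \ref{p:f-dim-comm} shows that for $a\in\B_n$ only the first $n$ terms of $D^\tau_\beta$ contribute, i.e. $[D^\tau_\beta, \pi_\tau(\alpha^{(n)}(a))] = \sum_{k=1}^n a^\tau_{\beta,k}[Q^\tau_k, \pi_\tau(\alpha^{(n)}(a))]$, since $\pi_\tau(\alpha^{(n)}(a))$ commutes with $Q^\tau_k$ for $k > n$. Because $D^{\tau_n}_\beta$ uses the \emph{same} coefficients $a^\tau_{\beta,k}$, the three intertwining relations combine termwise to give
\[
[D^\tau_\beta, \pi_\tau(\alpha^{(n)}(a))]\,U = U\,[D^{\tau_n}_\beta, \pi_{\tau_n}(a)].
\]
Applying both sides to $b\in\B_n$, taking $\|\cdot\|_\tau$, and using that $U$ is an isometry turns $\|[D^\tau_\beta, \pi_\tau(\alpha^{(n)}(a))](\alpha^{(n)}(b))\|_\tau$ into $\|[D^{\tau_n}_\beta, \pi_{\tau_n}(a)](b)\|_{\tau_n}$; taking the supremum over the unit ball of $(\B_n,\|\cdot\|_{\tau_n})$ gives exactly $L^{\tau_n}_\beta(a)$ by its definition in \eqref{eq:ind-lim-fd-lip}. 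The hard part will be the conditional-expectation identity $\alpha^{(n)}\circ E^{\tau_n}_{n,k} = E^\tau_k\circ\alpha^{(n)}$: making precise that the transported map is genuinely a $\tau$-preserving conditional expectation onto $\A_k$ and invoking its uniqueness is the step that requires care, whereas everything else is a bookkeeping of intertwiners through an isometry.
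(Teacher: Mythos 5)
Your proposal is correct and follows essentially the same route as the paper's proof: the first equality comes from Proposition \ref{p:f-dim-comm} together with the observation that $\alpha^{(n)}$ is a trace-norm isometry carrying the unit ball of $(\B_n,\|\cdot\|_{\tau_n})$ onto that of $(\A_n,\|\cdot\|_\tau)$, and the second equality comes from the intertwining identity $E^\tau_k\circ\alpha^{(n)}=\alpha^{(n)}\circ E^{\tau_n}_{n,k}$, applied termwise to the commutator and combined with the same isometry. The only difference is presentational: you prove that intertwining identity via uniqueness of trace-preserving conditional expectations (a fact already available in the paper's setup), whereas the paper cites the argument of \cite[Proposition 3.5]{Aguilar24}, and you package the element-wise computation as conjugation by the isometry $U$.
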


\begin{proof}
 Let $n \in \N$ and let $a \in \B_n$, then by Proposition \ref{p:f-dim-comm}
 \begin{align*}
 L^\tau_\beta\circ \alpha^{(n)}(a)& = L^\tau_\beta(\alpha^{(n)}(a))\\
 &= \|[D^\tau_\beta, \pi_\tau(\alpha^{(n)}(a))]\|_{B(\A^\tau_n)}\\
 & = \sup \{ \|[D^\tau_\beta, \pi_\tau(\alpha^{(n)}(a))](c)  \|_\tau: c \in \A_n, \|c\|_\tau \leq 1\}.
 \end{align*}
 Consider $c\in \A_n$, then there exists a unique $b \in \B_n$ such that $\alpha^{(n)}(b)=c$. We have
 \begin{align*}
 \|c\|_\tau^2=\tau(c^*c)=\tau\left(\alpha^{(n)}(b)^*\alpha^{(n)}(b) \right)=\tau \left( \alpha^{(n)}(b^*b)\right)= \tau_n(b^*b)=\|b\|^2_{\tau_n}.
 \end{align*}
 Hence
\[
L^\tau_\beta\circ \alpha^{(n)}(a)= \sup \{ \|[D^\tau_\beta, \pi_\tau(\alpha^{(n)}(a))](\alpha^{(n)}(b))  \|_\tau: b \in \B_n, \|b\|_{\tau_n} \leq 1\}
\]
Let $b \in \B_n$. 
Let $k \in \{0,1,\ldots, n\}$.   Then a similar argument as the beginning of the proof of \cite[Proposition 3.5]{Aguilar24} provides\[ 
E^\tau_k\circ \alpha^{(n)}   = \alpha^{(n)}\circ E^{\tau_n}_{n,k} 
\]
and
\[
E^\tau_{k-1}\circ \alpha^{(n)}   = \alpha^{(n)}\circ E^{\tau_n}_{n,k-1}
\] 
by Expression \eqref{eq:ind-lim-maps}. 
Next, we have 
\begin{align*}
& [Q^\tau_k,\pi_\tau(\alpha^{(n)}(a))](\alpha^{(n)}(b))\\
&\quad  =((E^\tau_k-E^\tau_{k-1})\pi_\tau(\alpha^{(n)}(a))-\pi_\tau(\alpha^{(n)}(a))(E^\tau_k-E^\tau_{k-1}))(\alpha^{(n)}(b)) 
\end{align*}
Now
\begin{align*}
\pi_\tau(\alpha^{(n)}(a))(E^\tau_k-E^\tau_{k-1})(\alpha^{(n)}(b))& =\pi_\tau(\alpha^{(n)}(a))(\alpha^{(n)}(E^{\tau_n}_{n,k}(b))-\alpha^{(n)}(E^{\tau_n}_{n,k-1}(b))) \\
& = \alpha^{(n)}(a)(\alpha^{(n)}(E^{\tau_n}_{n,k}(b))-\alpha^{(n)}(E^{\tau_n}_{n,k-1}(b)))\\
& = \alpha^{(n)}(aE^{\tau_n}_{n,k}(b)-aE^{\tau_n}_{n,k-1}(b))
\end{align*}
and similarly
\[
(E^\tau_k-E^\tau_{k-1})\pi_\tau(\alpha^{(n)}(a))(\alpha^{(n)}(b)) = \alpha^{(n)}(E^{\tau_n}_{n,k}(ab)-E^{\tau_n}_{n,k-1}(ab)).
\]
Thus
\begin{align*}
& [Q^\tau_k,\pi_\tau(\alpha^{(n)}(a))](\alpha^{(n)}(b))\\
&\quad  =\alpha^{(n)}\left(  E^{\tau_n}_{n,k}(ab)-E^{\tau_n}_{n,k-1}(ab)-(aE^{\tau_n}_{n,k}(b)-aE^{\tau_n}_{n,k-1}(b))\right).
\end{align*}
However, 
\begin{align*}
     & E^{\tau_n}_{n,k}(ab)-E^{\tau_n}_{n,k-1}(ab)-(aE^{\tau_n}_{n,k}(b)-aE^{\tau_n}_{n,k-1}(b)) \\
    & \quad = E^{\tau_n}_{n,k}(\pi_{\tau_n}(a)(b))-E^{\tau_n}_{n,k-1}(\pi_{\tau_n}(a)(b))\\
    & \quad \quad - (\pi_{\tau_n}(a)(E^{\tau_n}_{n,k}(b))-\pi_{\tau_n}(a)(E^{\tau_n}_{n,k-1}(b)))\\
    & \quad = (E^{\tau_n}_{n,k} -E^{\tau_n}_{n,k-1})(\pi_{\tau_n}(a)(b))- \pi_{\tau_n}(a) ((E^{\tau_n}_{n,k}- E^{\tau_n}_{n,k-1})(b))\\
    & \quad = Q^{\tau_n}_{n,k}(\pi_{\tau_n}(a)(b))- \pi_{\tau_n}(a)(( Q^{\tau_n}_{n,k})(b))\\
    &\quad  = (Q^{\tau_n}_{n,k}(\pi_{\tau_n}(a))- \pi_{\tau_n}(a)( Q^{\tau_n}_{n,k}))(b)\\
    & \quad =[Q^{\tau_n}_{n,k},\pi_{\tau_n}(a)](b).
\end{align*}
Hence
\[
[D^\tau_\beta, \pi_\tau(\alpha^{(n)}(a))](\alpha^{(n)}(b)) =\alpha^{(n)}([D^{\tau_n}_\beta,\pi_{\tau_n}(a) ](b))
\]
and so as above
\[
\|[D^\tau_\beta, \pi_\tau(\alpha^{(n)}(a))](\alpha^{(n)}(b))\|_\tau=\|[D^{\tau_n}_\beta,\pi_{\tau_n}(a) ](b)\|_{\tau_n}.
\]
Therefore
\[
L^\tau_\beta\circ \alpha^{(n)}(a)= \sup \{ \|[D^\tau_\beta, \pi_\tau(\alpha^{(n)}(a))](\alpha^{(n)}(b))  \|_\tau: b \in \B_n, \|b\|_{\tau_n} \leq 1\}=L^{\tau_n}_\beta(a)
\]
of Expression \eqref{eq:ind-lim-fd-lip}
as desired.
\end{proof}

 Now that we have an expression for the L-seminorms on the terms of a given inductive sequence, we would like to show that these form a continuous field of L-seminorms with respect to weak* convergence of the faithful tracial state. However, since the norms defining our L-seminorms are operator norms this takes some care, which is why we need some tools from metric geometry. The following result might be known in metric geometry, but we cannot find a proof and so we provide one here. The following result also serves as a generalization of \cite[Lemma 3.9]{Aguilar24}.

 \begin{lemma}\label{l:sup-conv}
 Let $(X, d)$ be  a metric space. Let $(C_n)_{n \in \N}$ be a sequence of compact subsets of $X$ that converges in the Hausdorff distance with respect to $d$, $Haus_d,$ to a compact $C\subseteq X$. Let $C'\subseteq X$ be a compact set such that $C\cup (\cup_{n \in \N} C_n)\subseteq C'$.    Let $(f_n)_{n \in \N}$ be a sequence of real-valued continuous functions on $X$ and let $f:X \rightarrow \R$ be continuous. 
 
 If $(f_n)_{n \in \N}$ converges uniformly to $f$  on $C'$, then    $(\sup_{x \in C_n} f_n(x))_{n \in \N}$ converges to $\sup_{x \in C} f(x)$ in the usual topology on $\R.$
 \end{lemma}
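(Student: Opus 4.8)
The plan is to prove the two one-sided inequalities $\limsup_n M_n \le M$ and $\liminf_n M_n \ge M$, where I write $M_n = \sup_{x\in C_n} f_n(x)$ and $M = \sup_{x\in C} f(x)$. Since each $C_n$ and $C$ is compact and $f_n$, $f$ are continuous, all these suprema are attained, so I may fix maximizers $x_n \in C_n$ with $f_n(x_n) = M_n$ and a point $x^\ast \in C$ with $f(x^\ast) = M$. The whole argument rests on two features of the hypotheses working in tandem: the geometric input that $\mathsf{Haus}_d(C_n, C) \to 0$ controls where points of $C_n$ can accumulate, and the analytic input that $f_n \to f$ uniformly on the single compact set $C'$, which lets me replace $f_n$ by $f$ with an error that is uniform in the point.

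For the upper bound I would start from the maximizers $x_n \in C_n \subseteq C'$. By compactness of $C'$, pass to a subsequence along which $M_{n_k}$ tends to $\limsup_n M_n$ and, refining again, along which $x_{n_k} \to x^\infty$ for some $x^\infty \in C'$. The key step is to show $x^\infty \in C$: since $x_{n_k} \in C_{n_k}$, Hausdorff convergence gives $d(x_{n_k}, C) \le \mathsf{Haus}_d(C_{n_k}, C) \to 0$, and continuity of $y \mapsto d(y, C)$ forces $d(x^\infty, C) = 0$, whence $x^\infty \in C$ because $C$ is closed. Then the estimate $|f_{n_k}(x_{n_k}) - f(x^\infty)| \le |f_{n_k}(x_{n_k}) - f(x_{n_k})| + |f(x_{n_k}) - f(x^\infty)|$ has first term vanishing by uniform convergence on $C'$ and second term vanishing by continuity of $f$, so $\limsup_n M_n = \lim_k f_{n_k}(x_{n_k}) = f(x^\infty) \le M$.

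For the lower bound I would run the argument in the other direction, starting from $x^\ast \in C$. Hausdorff convergence gives $d(x^\ast, C_n) \le \mathsf{Haus}_d(C, C_n) \to 0$, and compactness of $C_n$ lets me choose $y_n \in C_n$ realizing this distance, so $y_n \to x^\ast$ with $y_n \in C'$ for all $n$. The same two-term estimate, now using $f_n \to f$ uniformly on $C'$ together with continuity of $f$ at $x^\ast$, shows $f_n(y_n) \to f(x^\ast) = M$; since $M_n \ge f_n(y_n)$, we conclude $\liminf_n M_n \ge M$. Combining the two inequalities yields $\lim_n M_n = M$.

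I expect the main obstacle to be the verification that the accumulation point $x^\infty$ in the upper-bound argument actually lies in $C$. This is precisely where the Hausdorff hypothesis is indispensable: uniform convergence of the functions alone would not prevent the maximizers of $f_n$ on $C_n$ from drifting outside $C$, and it is also the step requiring care in coordinating the double subsequence extraction with the continuity of the distance-to-$C$ function. The remaining estimates are routine applications of the triangle inequality, once the common compact set $C'$ is exploited to make the convergence $f_n \to f$ uniform over all the points produced by the two constructions.
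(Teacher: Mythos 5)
Your proof is correct, but it follows a genuinely different route from the paper's. You prove the two one-sided inequalities $\limsup_n \sup_{C_n} f_n \le \sup_C f$ and $\liminf_n \sup_{C_n} f_n \ge \sup_C f$ by a sequential compactness argument: extract maximizers $x_n \in C_n$, pass to a convergent subsequence in $C'$, and use the $1$-Lipschitz function $y \mapsto d(y,C)$ together with Hausdorff convergence to force the limit point into $C$; in the other direction you push the maximizer $x^\ast \in C$ into nearby points $y_n \in C_n$. The paper instead argues quantitatively with a fixed $\varepsilon$: it splits $\bigl|\sup_{C_n} f_n - \sup_C f\bigr|$ into $\bigl|\sup_{C_n} f_n - \sup_C f_n\bigr| + \bigl|\sup_C f_n - \sup_C f\bigr|$, controls the second term by citing \cite[Lemma 3.9]{Aguilar24} (the case of a fixed compact set), and controls the first term by a contradiction argument that combines Hausdorff closeness of $C_n$ to $C$ with uniform equicontinuity of the family $(f_n)$ on $C'$ (extracted from the uniform convergence hypothesis). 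What your approach buys is self-containedness---no appeal to the external lemma and no need to establish equicontinuity of the family---at the cost of being purely qualitative (subsequence extraction gives no explicit rate); the paper's approach buys an explicit $\varepsilon$--$\delta$ estimate and reuse of machinery already cited elsewhere in the article, which fits its pattern of reducing to \cite{Aguilar24}. Both arguments use the two hypotheses (Hausdorff convergence of the sets, uniform convergence of the functions on the common compact $C'$) in the same essential way, so either proof is acceptable; yours would serve as a valid drop-in replacement.
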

 \begin{proof} 
Let $\varepsilon>0$. By uniform convergence, there exists $\delta>0$ such that for every $a,b \in C'$ and $n \in \N$, we have 
\[
|f_n(a)-f_n(b)|<\varepsilon/2.
\]
Let $N \in \N$ such that for every $n \geq N$
\[
Haus_d(C_n,C)<\delta/3
\]
and  \[|\sup_{x \in C}f_n(x) -\sup_{x \in C}f(x)| < \varepsilon/2\] by \cite[Lemma 3.9]{Aguilar24}.

Let $n \geq N$. By compact, there exists $x' \in C$ such that 
\[
\sup_{x\in C}f_n(x)=f_n(x').
\]
Now consider $\sup_{x \in C_n}f_n(x).$ Assume by way of contradiction that $|\sup_{x \in C}f_n(x)-\sup_{x \in C_n}f_n(x)|>\varepsilon/2.$ Assume first that 
\[
\sup_{x \in C}f_n(x)-\sup_{x \in C_n}f_n(x)>\varepsilon/2
\]
\[
f_n(x')-\sup_{x \in C_n}f_n(x)>\varepsilon/2
\]
\[
\sup_{x \in C_n}f_n(x)<f_n(x')-\varepsilon/2.
\]
Hence
\[
f_n(x)<f_n(x')-\varepsilon/2
\]
for every $x \in C_n$. Now there exists $x\in C_n$ such that $\dd(x,x')<\delta$ by definition of the Hausdorff distance. Hence
\[
|f_n(x)-f_n(x')|<\varepsilon/2.
\]
And so 
\[
f_n(x')-\varepsilon/2<f_n(x)<f_n(x')-\varepsilon/2, 
\]
contradiction.

On the other hand, if $\sup_{x \in C_n}f_n(x)-\sup_{x \in C}f_n(x)>\varepsilon/2$. Then
\[
\varepsilon/2<\sup_{x \in C_n}f_n(x)-f_n(x').
\]
Now, by compact, there exists $z\in C_n$ such that $\sup_{x \in C_n}f_n(x)=f_n(z)$. Hence
\[
f_n(x')<f_n(z)-\varepsilon/2
\]
and so
\[
\sup_{x \in C}f_n(x)<f_n(z)-\varepsilon/2.
\]
Thus
\[
f_n(x)<f_n(z)-\varepsilon/2
\]
for every $x \in C$. This leads to a similar contradiction. Hence,
\[
|\sup_{x \in C_n}f_n(x) - \sup_{x \in C}f_n(x)|\leq \varepsilon/2.
\]
Finally,
\begin{align*}
    |\sup_{x \in C_n} f_n(x)-\sup_{x \in C} f(x)|& \leq |\sup_{x \in C_n} f_n(x)-\sup_{x \in C} f_n(x)|+|\sup_{x \in C} f_n(x)-\sup_{x \in C} f(x)|\\
    & \leq \varepsilon/2+|\sup_{x \in C} f_n(x)-\sup_{x \in C} f(x)|<\varepsilon/2+\varepsilon/2=\varepsilon. \qedhere
\end{align*}
 \end{proof}

 Before providing continuous fields of $L$-seminorms, we  need one more result so that we can satisfy the hypothesis of the previous Lemma.

 \begin{proposition}\label{p:Haus-conv-balls}
Let $\overline{\N}=\N\cup \{\infty\}.$ Let $\B$ be a finite-dimensional C*-algebra and let $(\tau^n)_{n \in \overline{\N}}$ be a sequence of faithful tracial states on $\B$ such that $(\tau^n)_{n \in \N}$ weak* converges to $\tau_\infty$.   For each $n \in\overline{\N}$, define $C_n=\{b \in \B: \|b\|_{\tau^n}\leq 1\}$.

It holds that $(C_n)_{n \in \N}$ converges to $C_\infty$ in the Hausdorff distance with respect to $\|\cdot\|_\B$.
 \end{proposition}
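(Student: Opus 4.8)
The plan is to upgrade the weak* convergence of the traces to a two-sided multiplicative comparison of the norms $\|\cdot\|_{\tau^n}$ with $\|\cdot\|_{\tau_\infty}$, and then obtain the two Hausdorff inclusions by rescaling along rays. First I would record that each $C_n$ is compact: since $\tau^n$ is faithful, $\|\cdot\|_{\tau^n}$ is a genuine norm on the finite-dimensional space $\B$, so its unit ball is compact. Fixing a linear basis $(e_i)_i$ of $\B$, weak* convergence of $(\tau^n)_{n \in \N}$ to $\tau_\infty$ is exactly pointwise convergence on $\B$; in particular $\tau^n(e_i^* e_j) \to \tau_\infty(e_i^* e_j)$ for all $i,j$, so the quadratic forms $b \mapsto \|b\|_{\tau^n}^2 = \tau^n(b^*b)$ converge to $b \mapsto \|b\|_{\tau_\infty}^2$ uniformly on the compact unit sphere $S = \{b \in \B : \|b\|_\B = 1\}$, and hence so do $\|\cdot\|_{\tau^n} \to \|\cdot\|_{\tau_\infty}$ on $S$.

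Next I would exploit faithfulness of the limit. Because $\tau_\infty$ is faithful, $\|\cdot\|_{\tau_\infty}$ is continuous and strictly positive on the compact set $S$, so $c := \min_{u \in S} \|u\|_{\tau_\infty} > 0$. Combining this lower bound with the uniform convergence on $S$ and with homogeneity of the norms, I claim that for each $\varepsilon \in (0,1)$ there is $N \in \N$ with $(1-\varepsilon)\|b\|_{\tau_\infty} \leq \|b\|_{\tau^n} \leq (1+\varepsilon)\|b\|_{\tau_\infty}$ for all $n \geq N$ and all $b \in \B$: indeed, for $n$ large one has $\sup_{u \in S}\bigl|\,\|u\|_{\tau^n} - \|u\|_{\tau_\infty}\,\bigr| < \varepsilon c \leq \varepsilon\|u\|_{\tau_\infty}$ on $S$, and both sides of the resulting comparison scale homogeneously off $S$.

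Finally I would deduce the Hausdorff convergence. Fix, as in \eqref{eq:sharp-constants}, a constant $M \geq 1$ with $\|b\|_\B \leq M\|b\|_{\tau_\infty}$ for every $b \in \B$, available by finite dimensionality. Given $b \in C_n$, the comparison gives $\|b\|_{\tau_\infty} \leq (1-\varepsilon)^{-1}$, so $(1-\varepsilon)b \in C_\infty$ and $\dist(b, C_\infty) \leq \|b - (1-\varepsilon)b\|_\B = \varepsilon\|b\|_\B \leq \varepsilon M(1-\varepsilon)^{-1}$; conversely, given $b \in C_\infty$ we have $\|b\|_{\tau^n} \leq 1+\varepsilon$, so $(1+\varepsilon)^{-1}b \in C_n$ and $\dist(b, C_n) \leq \varepsilon\|b\|_\B \leq \varepsilon M$. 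Both estimates are uniform in the respective points and tend to $0$ as $n \to \infty$ (that is, as $\varepsilon \to 0$), whence $\Haus{\|\cdot\|_\B}(C_n, C_\infty) \to 0$. The one genuinely delicate point is the first step, namely passing from weak* (equivalently pointwise) convergence of the traces to \emph{uniform} control of the norms, together with the use of faithfulness of $\tau_\infty$ to keep the balls from degenerating or escaping to infinity; once the multiplicative comparison is in hand, the radial rescaling producing nearby points in each ball is routine.
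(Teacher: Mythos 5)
Your proof is correct, but it takes a genuinely different route from the paper's. The paper's own proof invokes the structure theorem to write $\B \cong \oplus_{k=1}^N M_{m_k}(\C)$, parametrizes each faithful tracial state by weights $\mu^n_k>0$, converts weak* convergence into convergence of these weights, and then, given a point of one ball, produces a nearby point of the other ball by rescaling \emph{block by block} with the factors $\sqrt{\mu^\infty_k}/\sqrt{\mu^n_k}$, estimating the error through the Frobenius norm; this yields an explicit upper bound on $\Haus{\|\cdot\|_\B}(C_n,C_\infty)$ in terms of the weights. You bypass the structure theory entirely: weak* (i.e.\ pointwise) convergence on a finite-dimensional space gives uniform convergence of the quadratic forms $b\mapsto\tau^n(b^*b)$ on the unit sphere $S$ of $\|\cdot\|_\B$, faithfulness of $\tau_\infty$ gives the strictly positive minimum $c$ of $\|\cdot\|_{\tau_\infty}$ on $S$, and the resulting two-sided comparison $(1-\varepsilon)\|\cdot\|_{\tau_\infty}\leq\|\cdot\|_{\tau^n}\leq(1+\varepsilon)\|\cdot\|_{\tau_\infty}$ lets you rescale \emph{globally} by a single scalar $(1\pm\varepsilon)^{\mp 1}$. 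Your version is more elementary and more general: it never uses the tracial property or the C*-block decomposition, only that the norms arise from pointwise-convergent quadratic forms with faithful limit, so the identical argument gives Hausdorff convergence of unit balls for any such family of inner-product norms on a finite-dimensional space; it also isolates the multiplicative norm comparison, which is close in spirit to Proposition \ref{p:constant-conv} on convergence of the sharp equivalence constants. What the paper's computation buys in exchange is a concrete quantitative Hausdorff bound expressed directly in the trace weights $\mu^n_k$, $\mu^\infty_k$. Two minor points you should make explicit if you write this up: the passage from uniform convergence of the quadratic forms to uniform convergence of the norms themselves on $S$ (say via $|\sqrt{x}-\sqrt{y}|\leq\sqrt{|x-y|}$), and the homogeneity argument extending the comparison from $S$ to all of $\B$; both are routine.
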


\begin{proof}
Since $\B$ is finite dimensional there exist $N \in \N, m_1, m_2, \ldots, m_N\in \N$ and a *-isomorphism   $\alpha: \oplus_{k=1}^N M_{m_k}(\C)\rightarrow \B$ onto $\B$. Set $\oplus_{k=1}^N M_{n_k}(\C)=\A$. For each $n \in \overline{\N}$, define that $\sigma^n=\tau^n\circ \alpha $. We have that $(\sigma^n)_{n \in \N}$ is s sequence of faithful tracial states that weak* converges to $\sigma_\infty$.
    Let $n \in \overline{\N}$, since $\sigma^n$ is a faithful tracial state there exist $\mu^n_{1}, \mu^n_2, \ldots, \mu^n_N\in (0, \infty)$ such that $\sum_{k=1}^N \mu^n_k=1$ and 
    \[
    \sigma^n((a_1, a_2, \ldots, a_N))=\sum_{k=1}^N \frac{\mu^n_k}{m_k} \mathrm{Tr}(a_k)
    \]
    for every $(a_1, a_2, \ldots, a_N)$.  By weak* convergence, we have that $((\mu^n_1, \mu^n_2, \ldots, \mu^n_N))_{n \in \N}$ converges to $(\mu^\infty_1, \mu^\infty_2, \ldots, \mu^\infty_N)$ in the product topology on $\R^N$.
    
    Define $D_n=\{a \in \A: \|a\|_{\sigma^n}\leq 1\}$. Let $a \in D_\infty$. Now since $\sigma^n$ is faithful we may define
    \[
    y=\left(\frac{\sqrt{\mu^\infty_1}}{\sqrt{\mu^n_1}} a_1, \frac{\sqrt{\mu^\infty_2}}{\sqrt{\mu^n_2}} a_2 , \ldots, \frac{\sqrt{\mu^\infty_N}}{\sqrt{\mu^n_N}} a_N\right).
    \]
    Thus
    \begin{align*}
        \|y\|_{\sigma^n}^2&= \sigma^n(y^*y)\\
        & = \sigma^n\left( \frac{\mu^\infty_1}{\mu^n_1}a_1^*a_1, \frac{\mu^\infty_2}{\mu^n_2}a_2^*a_2, \ldots ,\frac{\mu^\infty_N}{\mu^n_N}a_N^*a_N \right)\\
        & = \sum_{k=1}^N \frac{\mu^n_\infty}{m_k} \mathrm{Tr}(a_k^*a_k)\\
        & = \|a\|_{\sigma^\infty}^2\leq 1.
    \end{align*}
    Thus $y \in D_n$. Next,
    \begin{align*}
        \|a-y\|_\A& = \max\{\|a_1-y_1\|_{M_{n_1}(\C)}, \|a_2-y_2\|_{M_{n_2}(\C)}, \ldots, \|a_N-y_N\|_{M_{n_N}(\C)}\}
    \end{align*}
    Consider $k \in \{1,2,\ldots, N\}$. We have that since the operator norm is bounded by the Frobenius norm
    \begin{align*}
    \|a_k-y_k\|_{M_{n_k}(\C)}& = \left\| a_k- \frac{\sqrt{\mu^\infty_k}}{\sqrt{\mu^n_k}} a_k\right\|_{M_{n_k}(\C)}\\
    & = \left|1-  \frac{\sqrt{\mu^\infty_k}}{\sqrt{\mu^n_k}}  \right|\cdot \|a\|_{M_{n_k}(\C)}\\
    & \leq \left|1-  \frac{\sqrt{\mu^\infty_k}}{\sqrt{\mu^n_k}}  \right|\cdot  \sqrt{\mathrm{Tr}(a_k^*a_k)}.
    \end{align*}
However, as
\[
\sum_{k=1}^N \frac{\mu^\infty_k}{m_k} \mathrm{Tr}(a_k^*a_k)=\|a\|_{\sigma^\infty}^2\leq 1,
\]
we have that $ \frac{\mu^n_\infty}{m_k} \mathrm{Tr}(a_k^*a_k)\leq 1$, and so
\[
\sqrt{\mathrm{Tr}(a_k^*a_k)}\leq \frac{\sqrt{m_k}}{\sqrt{\mu^\infty_k}}
\]
and thus 
\[
\|a_k-y_k\|_{M_{n_k}(\C)}\leq \left|1-  \frac{\sqrt{\mu^\infty_k}}{\sqrt{\mu^n_k}}  \right|\cdot \frac{\sqrt{m_k}}{\sqrt{\mu^\infty_k}}
\]
Hence
\[
\|a-y\|_\A\leq \max \left\{ \left|1-  \frac{\sqrt{\mu^\infty_k}}{\sqrt{\mu^n_k}}  \right|\cdot \frac{\sqrt{m_k}}{\sqrt{\mu^\infty_k}}: k \in \{1,2,\ldots, N\} \right\}.
\]
By a symmetric argument, we have that 
\begin{align*}
Haus_{\|\cdot\|_\A}(D_n, D_\infty)&  \leq \max\Bigg\{\max \left\{ \left|1-  \frac{\sqrt{\mu^\infty_k}}{\sqrt{\mu^n_k}}  \right|\cdot \frac{\sqrt{m_k}}{\sqrt{\mu^\infty_k}}: k \in \{1,2,\ldots, N\} \right\}\ , \\
& \quad \quad \quad \quad \quad   \ \max \left\{ \left|1-  \frac{\sqrt{\mu^n_k}}{\sqrt{\mu^\infty_k}}  \right|\cdot \frac{\sqrt{m_k}}{\sqrt{\mu^n_k}}: k \in \{1,2,\ldots, N\} \right\} \Bigg\}
\end{align*}
 by definition of the Hausdorff distance. Thus as $(\mu^n_k)_{n \in \N}$ converges to $\mu^\infty_k$ for each $k \in \{1,2,\ldots, N\}$, we have that $\lim_{n \to \infty}Haus_{\|\cdot\|_\A}(D_n, D_\infty)=0.$ By construction of $\sigma^n$ and since $\alpha$ is a *-isomorphism, the proof is complete.
\end{proof}

We use these results  to provide continuous fields of $L$-seminorms.

\begin{theorem}\label{t:cont-field-L}
    Let $m \in \overline{\N}=\N\cup \{\infty\}$.  Let $(\tau^n)_{n \in \overline{\N}}$ be a sequence of faithful tracial states on $\A$.  If $(\tau^n_m)_{n \in \N}$ of Expression \eqref{eq:fd-trace} weak* converges to $\tau^\infty_m$ on $\B_m$, then for every $a \in \B_m$, we have $(L^{\tau^n_m}_\beta(a))_{n \in \N}$ of Expression \eqref{eq:ind-lim-fd-lip} converges to $L^{\tau^\infty_m}_\beta(a)$ in the usual topology on $\R$.
\end{theorem}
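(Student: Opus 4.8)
The plan is to realize each $L^{\tau^n_m}_\beta(a)$ as a supremum over a $\|\cdot\|_{\tau^n_m}$-unit ball and then invoke Lemma \ref{l:sup-conv}, taking the ambient metric space to be $(\B_m,\|\cdot\|_{\B_m})$. Fixing $a\in\B_m$, by Expression \eqref{eq:ind-lim-fd-lip} and the definition of the operator norm on $B(\B^{\tau^n_m}_m)$ I would write
\[
L^{\tau^n_m}_\beta(a)=\sup_{b\in C_n}f_n(b),\qquad f_n(b):=\|[D^{\tau^n_m}_\beta,\pi_{\tau^n_m}(a)](b)\|_{\tau^n_m},
\]
where $C_n=\{b\in\B_m:\|b\|_{\tau^n_m}\leq 1\}$, and likewise $L^{\tau^\infty_m}_\beta(a)=\sup_{b\in C_\infty}f(b)$ with $f=f_\infty$. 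Each $f_n$ and $f$ is real-valued and continuous, being a linear map followed by a norm. Proposition \ref{p:Haus-conv-balls}, applied to the finite-dimensional algebra $\B_m$ and the weak* convergent sequence $(\tau^n_m)_{n\in\N}$, gives $C_n\to C_\infty$ in the Hausdorff distance for $\|\cdot\|_{\B_m}$; since $C_\infty$ is bounded, the set $C_\infty\cup\left(\bigcup_n C_n\right)$ is then bounded, hence contained in some closed $\|\cdot\|_{\B_m}$-ball $C'$, which is compact by finite dimensionality. The only remaining hypothesis of Lemma \ref{l:sup-conv} to check is that $f_n\to f$ uniformly on $C'$.

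The first ingredient I would establish is convergence of the Dirac operators $v_n:=[D^{\tau^n_m}_\beta,\pi_{\tau^n_m}(a)]$ as linear endomorphisms of $\B_m$. Because the GNS representation acts by left multiplication, $\pi_{\tau^n_m}(a)$ equals left multiplication by $a$ independently of $n$, so all the $n$-dependence sits in $D^{\tau^n_m}_\beta=\sum_{k=1}^m a^{\tau^n}_{\beta,k}\,Q^{\tau^n_m}_{m,k}$. For $k\leq m$ the sharp constant $c^{\tau^n}_k$ depends only on $\tau^n_k=\tau^n_m\circ\alpha_{k,m-1}$ (Expression \eqref{eq:ind-lim-maps}), so $\tau^n_m\to\tau^\infty_m$ forces $\tau^n_k\to\tau^\infty_k$ weak* on $\B_k$ and hence, by the argument of Proposition \ref{p:constant-conv}, $a^{\tau^n}_{\beta,k}=c^{\tau^n}_k/\beta_k\to c^{\tau^\infty}_k/\beta_k=a^{\tau^\infty}_{\beta,k}$. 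Moreover, each $E^{\tau^n_m}_{m,k}$ is the $\langle\cdot,\cdot\rangle_{\tau^n_m}$-orthogonal projection onto the fixed subalgebra $\alpha_{k,m-1}(\B_k)$, and in finite dimensions the inner products $\langle\cdot,\cdot\rangle_{\tau^n_m}$ converge while orthogonal projection onto a fixed subspace depends continuously on the inner product; hence the $E^{\tau^n_m}_{m,k}$, and thus the $Q^{\tau^n_m}_{m,k}$, converge to their $\tau^\infty_m$-counterparts. Together these give $v_n\to v_\infty$ in the operator norm $\|\cdot\|_{\mathrm{op}}$ induced by $\|\cdot\|_{\B_m}$.

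To get uniform convergence of $f_n$ on $C'$, set $R=\sup_{b\in C'}\|b\|_{\B_m}$ and split, for $b\in C'$,
\[
|f_n(b)-f(b)|\leq\big|\,\|v_n(b)\|_{\tau^n_m}-\|v_\infty(b)\|_{\tau^n_m}\,\big|+\big|\,\|v_\infty(b)\|_{\tau^n_m}-\|v_\infty(b)\|_{\tau^\infty_m}\,\big|.
\]
Using $\|\cdot\|_{\tau^n_m}\leq\|\cdot\|_{\B_m}$, the first term is at most $\|v_n(b)-v_\infty(b)\|_{\B_m}\leq\|v_n-v_\infty\|_{\mathrm{op}}\,R$, which $\to 0$ uniformly in $b$ by the preceding paragraph. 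For the second term, $v_\infty(C')$ is bounded, and since weak* convergence of functionals on the finite-dimensional $\B_m$ is convergence in norm, $\|w\|_{\tau^n_m}^2=\tau^n_m(w^*w)\to\tau^\infty_m(w^*w)=\|w\|_{\tau^\infty_m}^2$ uniformly for $w\in v_\infty(C')$; using $|s-t|\leq|s^2-t^2|^{1/2}$ for $s,t\geq 0$, the norms themselves converge uniformly, so the second term also $\to 0$ uniformly in $b$. Hence $f_n\to f$ uniformly on $C'$, and Lemma \ref{l:sup-conv} yields $\sup_{b\in C_n}f_n(b)\to\sup_{b\in C_\infty}f(b)$, i.e.\ $L^{\tau^n_m}_\beta(a)\to L^{\tau^\infty_m}_\beta(a)$.

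I expect the main obstacle to be that both the domain and the codomain of $v_n$ carry the moving Hilbert norm $\|\cdot\|_{\tau^n_m}$, so Lemma \ref{l:sup-conv} cannot be applied with a single fixed norm recording the functions; the norm's $n$-dependence must be absorbed into the $f_n$ and controlled by hand, which is exactly the role of the two-term split. A secondary subtlety is that only weak* convergence on $\B_m$ is assumed, so the convergence of the constants $c^{\tau^n}_k$ and of the conditional expectations for $k\leq m$ has to be routed through the compatibility $\tau^n_k=\tau^n_m\circ\alpha_{k,m-1}$ before Proposition \ref{p:constant-conv} and the finite-dimensional continuity of orthogonal projections can be applied.
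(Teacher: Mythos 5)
Your proof is correct and follows essentially the same route as the paper's: both express $L^{\tau^n_m}_\beta(a)$ as $\sup_{b\in C_n}f_n(b)$ over the moving unit balls $C_n$ and verify the hypotheses of Lemma \ref{l:sup-conv}, with Proposition \ref{p:Haus-conv-balls} supplying the Hausdorff convergence. The only differences are in how two hypotheses are discharged: the paper produces the common compact set $C'$ from convergence of the sharp norm-equivalence constants via \cite[Proposition 3.10]{Aguilar24} and cites \cite[Proposition 3.6]{Aguilar24} for the uniform convergence of $(f_n)_{n\in\N}$, whereas you obtain $C'$ directly from the Hausdorff convergence of the $C_n$ and prove the uniform convergence by hand (operator-norm convergence of the commutators plus the two-term split handling the moving norm $\|\cdot\|_{\tau^n_m}$), both of which are valid.
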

\begin{proof}
Let $a \in \B_m$. 
   Let $n \in \overline{\N}$, define
    \[
    f_n: \B_m \rightarrow \R
    \]
    by $f_n(b)=\|[D^{\tau^n_m}_\beta,\pi_{\tau^n_m}(a) ](b)\|_{\tau^n_m}$. Note that $f_n$ is continuous with respect to $\|\cdot\|_{\B_m}$ by finite dimensionality. 

Define
\[
C_n=\{b \in \B_n: \|b\|_{\tau^n_m}\leq 1\},
\]
which is compact with respect to $\|\cdot\|_{\B_m}$ by finite dimensionality.

Next, we verify that all the $C_n$'s are contained in one compact set. By finite dimensional, there exists a sharp $\nu_n>0$   such that 
\[
\|\cdot\|_{\B_m} \leq \nu_n\cdot \|\cdot \|_{\tau^n_m}.
\]
By \cite[Proposition 3.10]{Aguilar24}, we have that $(\nu_n)_{n \in \N}$ converges to $\nu_\infty$. Hence $r=\sup_{n \in \overline{\N}}  \nu_n<\infty $. Now, let $b \in C_n$, then $\|\cdot \|_{\tau^n_m}\leq 1$, and so
\[
\|b\|_{\B_m} \leq \nu_n\cdot \|\cdot \|_{\tau^n_m}\leq \nu_n \leq r.
\]
Hence $b \in \{b \in \B_m : \|b\|_{\B_m} \leq r\}$. Set $C'=\{b \in \B_m : \|b\|_{\B_m} \leq r\}$. We have that $C'$ is compact by finite dimensionality and that $C_n \subseteq C'$ for every $n \in \overline{N}$.

Next, by Proposition \ref{p:constant-conv} and by a similar argument to \cite[Proposition 3.6]{Aguilar24}, we have that $(f_n)_{n \in \N}$ converges uniformly to $f_\infty$ on any compact subset of $(\B_m, \|\cdot\|_{\B_m})$ including $C'$. Finally, we have that $(C_n)_{n \in \N}$ converges to $C_\infty$ in the Hausdorff distance with respect to $\|\cdot\|_{\B_m}$ by weak* convergence  by Proposition \ref{p:Haus-conv-balls}. Therefore by Lemma \ref{l:sup-conv}, we have that 
\[
(\sup_{b \in C_n}f_n(b))_{n\in \N}=(L^{\tau^n_m}_\beta(a))_{n \in \N}
\]
 converges to $\sup_{b \in C_\infty} f_\infty (b)=L^{\tau^\infty_m}_\beta(a) $ in the usual topology on $\R.$
\end{proof}

\section{Convergence of sequences of Effros-Shen algebras and UHF algebras}

We will now provide our main convergence results. But first, we need notation for each of these applications. We begin with the Effros-Shen algebras which were first defined in 
\cite{Effros80b}.

 Let $\theta \in \R$ be irrational. There exists a unique sequence of   integers $(r^\theta_n)_{n \in \N }$  with $r^\theta_n>0$ for all $n \in \N\setminus \{0\}$ such that 
 \[
 \theta =\lim_{n \to \infty} r_0^\theta +\cfrac{1}{r^\theta_1 + \cfrac{1}{r^\theta_2 + \cfrac{1}{r^\theta_3 +\cfrac{1}{\ddots+\cfrac{1}{r^\theta_n}}}}}.
 \]
 When $\theta \in (0,1)$, we have that $r^\theta_0=0$. The sequence $(r^\theta_n)_{n \in \N_0}$ is   the {\em continued fraction expansion of $\theta$} \cite{Hardy38}. 
  
 For each $n \in \N$, define \[
p_0^\theta=r_0^\theta, \quad p_1^\theta=1 \quad \text{ and } \quad q_0^\theta=1, \quad q_1^\theta=r^\theta_1,
\]
and   set 
\[
p_{n+1}^\theta=r^\theta_{n+1} p_{n}^\theta+p_{n-1}^\theta
\]
and 
\[
q_{n+1}^\theta= r^\theta_{n+1} q_{n}^\theta+q_{n-1}^\theta.
\]
The sequence $\left(p_{n}^\theta/q_{n}^\theta\right)_{n \in \mathbb{N}_0}$ of {\em convergents} $p^\theta_n/q^\theta_n$ converges to $\theta$. In fact, for each $n \in \N$, 
\[
 \frac{p_n^\theta}{q_n^\theta}=r_0^\theta +\cfrac{1}{r^\theta_1 + \cfrac{1}{r^\theta_2 + \cfrac{1}{r^\theta_3 +\cfrac{1}{\ddots+\cfrac{1}{r^\theta_n}}}}}.
\]
 
 We now define the terms for the inductive sequence that form the Effros-Shen algebras. Let $\B_{\theta,0}=\mathbb{C}$ and, for each $n \in \mathbb{N}_0$, let
\[
\B_{\theta,n}=M_{q_n^\theta}(\C) \oplus M_{q_{n-1}^\theta}(\C)
\]
and for each $n \in \N$, set $\A_{\theta, n}=\alpha^{(n)}(\B_{\theta,n})$.

These form an inductive sequence with the   maps   
 \begin{equation}\label{eq:theta-alg}
\alpha_{\theta,n}:a\oplus b \in \B_{\theta,n} \mapsto   \diag\left(a, \ldots, a,b \right)  \oplus a \in \B_{\theta,n+1},
\end{equation}
where there are $r^\theta_{n+1}$ copies of $a$ on the diagonal in the first summand of $\B_{\theta,n+1}$. This   is a unital *-monomorphism by construction.  For $n=0$,
\[\alpha_{\theta, 0}: \lambda \in  \B_{\theta,0} \mapsto   \diag(\lambda, \ldots, \lambda)  \oplus \lambda\ \in \B_{\theta,1}.
\]
The {\em Effros--Shen algebra associated to $\theta$} is   the inductive limit (see \cite[Section 6.1]{Murphy90})
\[
\A_\theta=\underrightarrow{\lim} \ (\B_{\theta,n}, \alpha_{\theta,n})_{n \in \N}
\]
by \cite{Effros80b}.

There exists a unique faithful tracial state $\tau^\theta$ on $\A_\theta$ such that for each $n \in \N \setminus \{0\}$,  $\tau_{\theta,n}$ (see Expression \eqref{eq:fd-trace}) is defined for each $(a,b) \in \B_{\theta,n}$ by
\[
\tau_{\theta,n}(a,b)=t(\theta,n)\frac{1}{q_n^\theta}\mathrm{Tr}(a)+(1-t(\theta,n))\frac{1}{q_{n-1}^\theta}\mathrm{Tr}(b),
\]
where   \[t(\theta,n)=(-1)^{n-1}q_n^\theta  (\theta q_{n-1}^\theta -p_{n-1}^\theta ) \in (0,1)\]
 (see \cite[Lemma 5.5]{Aguilar-Latremoliere15}).
 
For each $n \in \N$, define  
 \begin{equation}\label{eq:beta-theta}
      \beta^\theta_n=\frac{1}{\dim(\A_{\theta,n})}=\frac{1}{(q_n^\theta)^2+(q_{n-1}^\theta)^2},
 \end{equation}
and note that $(\beta^\theta_n)_{n \in \N}$ is summable by \cite{Hardy38}. Finally, for each $n \in \N$, define 
\[
a^{\tau_\theta}_n=\frac{c^{\tau^\theta}_n}{\beta^\theta_n}
\]
where $c^{\tau^\theta}_n$ is given by Expression \eqref{eq:sharp-constants}.
\begin{theorem}\label{t:es}
The map
\[
\theta \in (0,1)\setminus \Q \longmapsto (\A_\theta, L^{\tau_\theta}_{\beta_\theta})
\]
is continuous with respect to the quantum  Gromov-Hausdorff propinquity of \cite{Latremoliere13} where $L^{\tau_\theta}_{\beta_\theta}$ is given by Theorem \ref{t:CI-thm}.
\end{theorem}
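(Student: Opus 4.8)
The plan is to establish sequential continuity: I fix an irrational $\theta_\infty\in(0,1)$ together with a sequence $(\theta_n)_{n\in\N}$ of irrationals in $(0,1)$ converging to $\theta_\infty$, and I show $\qpropinquity{}\bigl((\A_{\theta_n},L^{\tau_{\theta_n}}_{\beta_{\theta_n}}),(\A_{\theta_\infty},L^{\tau_{\theta_\infty}}_{\beta_{\theta_\infty}})\bigr)\to 0$. First I truncate both spaces at a common finite level $N$, using the triangle inequality for $\qpropinquity{}$ and the explicit finite-dimensional approximation of Theorem \ref{t:fd-approx}:
\begin{align*}
\qpropinquity{}\bigl((\A_{\theta_n},L^{\tau_{\theta_n}}_{\beta_{\theta_n}}),(\A_{\theta_\infty},L^{\tau_{\theta_\infty}}_{\beta_{\theta_\infty}})\bigr)
&\leq \sum_{k\geq N}\beta^{\theta_n}_k+\sum_{k\geq N}\beta^{\theta_\infty}_k \\
&\quad +\qpropinquity{}\bigl((\A_{\theta_n,N},L^{\tau_{\theta_n}}_{\beta_{\theta_n}}),(\A_{\theta_\infty,N},L^{\tau_{\theta_\infty}}_{\beta_{\theta_\infty}})\bigr).
\end{align*}
Since $q^\theta_k$ grows at least like the Fibonacci numbers for \emph{every} irrational $\theta$, the estimate $\beta^\theta_k\leq(q^\theta_k)^{-2}$ makes the tail $\sum_{k\geq N}\beta^\theta_k$ small uniformly in $\theta$; hence, given $\varepsilon>0$, I may fix $N$ so that the two outer terms are each below $\varepsilon/3$ for all $n$.

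The middle term compares two finite-dimensional quantum metric spaces, and here I would use the arithmetic of continued fractions. As $\theta_\infty$ is irrational, the set of reals sharing its first $N$ partial quotients is an open interval about $\theta_\infty$, so for all sufficiently large $n$ we have $r^{\theta_n}_k=r^{\theta_\infty}_k$ for $k\leq N$. Consequently the convergents $q^{\theta_n}_k$ and $p^{\theta_n}_k$, the matrix algebras $\B_{\theta_n,k}$, the connecting maps $\alpha_{\theta_n,k}$, and the weights $\beta^{\theta_n}_k$ all agree with those of $\theta_\infty$ for $k\leq N$. Thus for large $n$ the two truncated systems coincide, sharing a common finite-dimensional algebra $\B_N:=\B_{\theta_\infty,N}$; moreover, by Theorem \ref{t:ind-lim-comm}, each $(\A_{\theta,N},L^{\tau_\theta}_{\beta_\theta})$ is isometrically isomorphic to $(\B_N,L^{\tau_{\theta,N}}_{\beta_\theta})$, which carries $\qpropinquity{}$-distance zero. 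The middle term is therefore reduced to estimating $\qpropinquity{}$ between the two $L$-seminorms $L^{\tau_{\theta_n,N}}_{\beta_{\theta_n}}$ and $L^{\tau_{\theta_\infty,N}}_{\beta_{\theta_\infty}}$ living on the \emph{same} algebra $\B_N$.

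To control this I would apply the continuous-field result, Theorem \ref{t:cont-field-L}, to the common truncated system. The truncated traces on $\B_N$ are $\tau_{\theta_n,N}(a,b)=t(\theta_n,N)\tfrac{1}{q^{\theta_n}_N}\mathrm{Tr}(a)+(1-t(\theta_n,N))\tfrac{1}{q^{\theta_n}_{N-1}}\mathrm{Tr}(b)$; since the convergents have stabilized and $t(\theta,N)=(-1)^{N-1}q^\theta_N(\theta q^\theta_{N-1}-p^\theta_{N-1})$ is then an affine function of $\theta$, the coefficients converge and $\tau_{\theta_n,N}\to\tau_{\theta_\infty,N}$ weak$^*$ on $\B_N$. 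Theorem \ref{t:cont-field-L} then gives $L^{\tau_{\theta_n,N}}_{\beta_{\theta_n}}(a)\to L^{\tau_{\theta_\infty,N}}_{\beta_{\theta_\infty}}(a)$ for every $a\in\B_N$. The final step is to convert this pointwise convergence of $L$-seminorms on a fixed finite-dimensional algebra into $\qpropinquity{}$-convergence: each such $L$-seminorm descends to a genuine norm on the finite-dimensional quotient $\B_N/\C 1_{\B_N}$, so pointwise convergence upgrades to uniform convergence on bounded sets, the Lipschitz balls converge in Hausdorff distance, and the identity bridge $(\B_N,1_{\B_N},\mathrm{id},\mathrm{id})$ acquires length tending to $0$. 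This makes the middle term below $\varepsilon/3$ for $n$ large, and summing the three estimates proves continuity. I expect this last conversion to be the main obstacle, since it is the one genuinely metric-geometric input rather than a bookkeeping step; it is, however, standard in the Latr\'emoli\`ere--Aguilar framework and is obtained either by citing the finite-dimensional propinquity-convergence criterion or by directly bounding the reach and height of the identity bridge.
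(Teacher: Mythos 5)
Your proposal is correct and takes essentially the same route as the paper's proof: truncation on both sides via Theorem \ref{t:fd-approx} with tails bounded uniformly in $\theta$ (your Fibonacci-growth argument is exactly why the paper's dominating summable sequence from the proof of \cite[Theorem 5.14]{Aguilar-Latremoliere15} exists), continued-fraction stabilization to put both truncations on a common algebra, weak* convergence of the truncated traces, and Theorem \ref{t:cont-field-L} together with Theorem \ref{t:ind-lim-comm} to handle the middle term. The only step you sketch rather than cite---converting pointwise convergence of $L$-seminorms on the fixed finite-dimensional algebra into propinquity convergence via the identity bridge---is exactly what the paper invokes as ``the same proof as \cite[Lemma 5.13]{Aguilar-Latremoliere15}.''
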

\begin{proof}
Note that for every $\theta\in (0,1)\setminus \Q$ there exists a summable sequence of positive reals $(\beta_n)_{n \in\N}$ such that $\beta^\theta_n\leq \beta_n$ for every $n \in \N$ (see the beginning of the proof of \cite[Theorem 5.14]{Aguilar-Latremoliere15}. Now,
    let $(\theta^n)_{n \in \N}$ be a sequence in $(0,1)\setminus \Q$ that converges to some $\theta \in (0,1)\setminus \Q $ with respect to the usual topology on $\R$. Let $\varepsilon>0$. Choose $N_1 \in \N$  
    such that $\sum_{k=n}^\infty \beta_n < \varepsilon/3$ for every $n \geq N_1$. Now choose $N_2 \in \N$ such that $N_2\geq N_1$ and $q^{\theta_n}_k=q^\theta_k$ for every $n \geq N_1$ and $k \in \{1,2,\ldots, N_1\}$ which is possible by \cite[Proposition 5.10]{Aguilar-Latremoliere15}. Thus, for every $n \geq N_2$, we have $\B_{\theta_{n}, k}=\B_{\theta, k}$ and $\alpha_{\theta_n, k}=\alpha_{\theta, k}$ for every $k \leq N_1$. Now by \cite[Lemma 5.5]{Aguilar-Latremoliere15}, we have that $(\tau_{\theta_l, N_1})_{l \geq N_2}$ converges to $\tau_{\theta,N_1} $  in the weak* topology. Thus, by the same proof as \cite[Lemma 5.13]{Aguilar-Latremoliere15}, we have that there exists $N_3\geq N_2$ such that 
    \[
    \qpropinquity{} ((\B_{N_1}, L^{\tau_{\theta_n, N_1}}_{\beta^\theta_n}), (\B_{N_1}, L^{\tau_{\theta, N_1}}_{\beta_{\theta}}))< \varepsilon/3
    \]
    by Theorem \ref{t:cont-field-L}. Let $n \geq N_3$. By Theorem \ref{t:fd-approx} and Theorem \ref{t:ind-lim-comm} and the triangle inequality, we have
    \begin{align*}
     & \qpropinquity{} ((\A_{\theta_n}, L^{\tau^{\theta_n}}_{\beta^{\theta_n}}), (\A_{\theta}, L^{\tau^{\theta}}_{\beta^\theta}))\\
     & \quad \leq  \qpropinquity{} ((\A_{\theta_n}, L^{\tau^{\theta_n}}_{\beta^{\theta_n}}), (\A_{\theta_n,N_1}, L^{\tau^{\theta_n}}_{\beta^{\theta_n}}))  \\
     & \quad \quad +  \qpropinquity{} ( (\A_{\theta_n,N_1}, L^{\tau^{\theta_n}}_{\beta^{\theta_n}}), (\A_{\theta_n,N_1}, L^{\tau^{\theta}}_{\beta^\theta}))\\
     & \quad \quad \quad + \qpropinquity{}((\A_{\theta_n,N_1}, L^{\tau^{\theta}}_{\beta^\theta}), (\A_{\theta}, L^{\tau^{\theta}}_{\beta^\theta}))\\
     & \leq \sum_{k=N_1}^\infty \beta^{\theta_n}_k + \qpropinquity{} ( (\A_{\theta_n,N_1}, L^{\tau^{\theta_n}}_{\beta^{\theta_n}}), (\A_{\theta_n,N_1}, L^{\tau^{\theta}}_{\beta^\theta}))+\sum_{k=N_1}^\infty \beta^{\theta_n}_k\\
     & \leq \sum_{k=N_1}^\infty \beta_k + \qpropinquity{} ( (\A_{\theta_n,N_1}, L^{\tau^{\theta_n}}_{\beta^{\theta_n}}), (\A_{\theta_n,N_1}, L^{\tau^{\theta}}_{\beta^\theta}))+\sum_{k=N_1}^\infty \beta_k\\
     & < \varepsilon/3+\qpropinquity{} ( (\A_{\theta_n,N_1}, L^{\tau^{\theta_n}}_{\beta^{\theta_n}}), (\A_{\theta_n,N_1}, L^{\tau^{\theta}}_{\beta^\theta}))+\varepsilon/3\\
     & = 2\varepsilon/3+ \qpropinquity{} ((\B_{N_1}, L^{\tau_{\theta_n, N_1}}_{\beta^\theta_n}), (\B_{N_1}, L^{\tau_{\theta, N_1}}_{\beta_{\theta}}))\\
     & < 2\varepsilon/3+\varepsilon/3=\varepsilon
    \end{align*}
    as desired.
\end{proof}

  Next, we move to the UHF case. 
\begin{definition}\label{Baire-Space-def}
The \emph{Baire space} $\BaireSpace$ is the set $(\N\setminus\{0\})^\N$ endowed with the metric $\mathsf{d}$ defined, for any two $(x(n))_{n\in\N}$, $(y(n))_{n\in\N}$ in $\BaireSpace$, by 
\begin{equation*}
 d_\BaireSpace\left((x(n))_{n\in\N}, (y(n))_{n\in\N}\right) = \begin{cases}
0  \ \ \ \ \text{ if $x(n) = y(n)$ for all $n\in\N$},\\ \\
2^{-\min\left\{ n \in \N : x(n) \not= y(n) \right\}} \ \ \  \text{ otherwise}\text{.}
\end{cases}
\end{equation*}
\end{definition}

Next, we define UHF algebras in a way that  suits  our needs. Given    $(\beta(n))_{n\in\N} \in \BaireSpace$, let 
\[
\boxtimes \beta(n)=\begin{cases}
1 & \text{ if } n=0,\\
\prod_{j=0}^{n-1} (\beta(j)+1) & \text{ otherwise}.
\end{cases}
\]

For each $n \in \N$, define a unital *-monomorphism by
\[
\mu_{\beta,n} : a \in \M_{\boxtimes \beta(n)}(\C) \longmapsto \mathrm{diag}(a,a,\ldots, a) \in \M_{\boxtimes \beta(n+1)}(\C),
\]
where there are $\beta(n)+1$ copies of $a$ in $\mathrm{diag}(a,a,\ldots,a)$. Set $\mathsf{uhf}((\beta(n))_{n\in\N})=\underrightarrow{\lim} \ (\M_{\boxtimes \beta(n)}(\C) , \mu_{\beta,n})_{n \in \N}$. The map
\[
(\beta(n))_{n\in\N} \in \BaireSpace \longmapsto \mathsf{uhf}((\beta(n))_{n\in\N})
\]
is a surjection onto the class of all UHF algebras up to *-isomorphism by \cite[Chapter III.5]{Davidson}.

For each $n \in \N$, let 
\[
\gamma_\beta(n)=\frac{1}{\dim(\M_{\boxtimes \beta(n)}(\C))},
\]
and let \[\rho_\beta\] be the unique faithful tracial state on $uhf((\beta(n))_{n\in\N})$. We  now state our result for continuity of UHF algebras with respect to the Baire space.

\begin{theorem}
    The map
    \[
    \beta \in \BaireSpace \longmapsto (\mathsf{uhf}(\beta), L^{\rho_\beta}_{\gamma_\beta})
    \]
    is continuous with respect to the quantum  Gromov-Hausdorff propinquity of \cite{Latremoliere13} where $L^{\rho_\beta}_{\gamma_\beta}$ is given by Theorem \ref{t:CI-thm}.
\end{theorem}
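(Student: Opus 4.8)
The plan is to follow the scheme of Theorem~\ref{t:es}, replacing the continued-fraction data by the Baire-space metric and exploiting a simplification peculiar to the UHF setting. Let $(\beta^l)_{l\in\N}$ be a sequence in $\BaireSpace$ converging to $\beta\in\BaireSpace$, and fix $\varepsilon>0$. First I would record a \emph{uniform} summable bound on the coefficient sequences: since every coordinate of a point of $\BaireSpace$ is at least $1$, we have $\boxtimes\beta'(n)\geq 2^n$ and hence
\[
\gamma_{\beta'}(n)=\frac{1}{(\boxtimes\beta'(n))^2}\leq 4^{-n}
\]
for every $\beta'\in\BaireSpace$ and every $n\in\N$. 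Consequently $\sum_{k=n}^\infty\gamma_{\beta'}(k)\leq\tfrac{4}{3}\,4^{-n}$ for all $\beta'$, so I may choose $N_1\in\N$, independent of $\beta'$, with $\sum_{k=N_1}^\infty\gamma_{\beta'}(k)<\varepsilon/3$. This replaces the per-point dominating sequence used at the start of the proof of \cite[Theorem~5.14]{Aguilar-Latremoliere15}.

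Next I would use convergence in $\BaireSpace$ to make the finite-dimensional pieces agree \emph{exactly}, not merely approximately. Since $\beta^l\to\beta$, there is $N_2\in\N$ with $d_\BaireSpace(\beta^l,\beta)<2^{-N_1}$ for all $l\geq N_2$, which by Definition~\ref{Baire-Space-def} means $\beta^l(k)=\beta(k)$ for every $k\leq N_1$. Hence for $l\geq N_2$ the building blocks $\M_{\boxtimes\beta^l(k)}(\C)=\M_{\boxtimes\beta(k)}(\C)$ and the connecting maps $\mu_{\beta^l,k}=\mu_{\beta,k}$ coincide for all $k\leq N_1$, so the two inductive systems up to level $N_1$, together with their canonical chains of matrix subalgebras, are identical. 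The crucial observation is that the restricted traces also coincide: because each building block is a \emph{single} full matrix algebra, the only tracial state on it is its normalized trace, so $\rho_{\beta^l,N_1}=\rho_{\beta,N_1}=\tfrac{1}{\boxtimes\beta(N_1)}\mathrm{Tr}$ (cf.\ Expression~\eqref{eq:fd-trace}). This forces every ingredient defining the finite-dimensional $L$-seminorm of Expression~\eqref{eq:ind-lim-fd-lip} to agree for $\beta^l$ and $\beta$: the $\tau$-preserving conditional expectations $E^{\rho_{\cdot,N_1}}_{N_1,k}$ (hence the operators $Q^{\rho_{\cdot,N_1}}_{N_1,k}$), the GNS norms $\|\cdot\|_{\rho_{\cdot,N_1}}$, the sharp constants $c^{\rho_\cdot}_k$ of Expression~\eqref{eq:sharp-constants}, and the coefficients $\gamma_{\beta^l}(k)=\gamma_\beta(k)$ for $k\leq N_1$ are all intrinsic to the matrix algebra and its normalized trace. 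By Theorem~\ref{t:ind-lim-comm} it follows that, pulled back to the common abstract algebra $\B_{N_1}=\M_{\boxtimes\beta(N_1)}(\C)$, the two finite-dimensional quantum metric spaces $(\B_{N_1},L^{\rho_{\beta^l,N_1}}_{\gamma_{\beta^l}})$ and $(\B_{N_1},L^{\rho_{\beta,N_1}}_{\gamma_\beta})$ are literally equal, so their propinquity vanishes. (One could instead invoke Theorem~\ref{t:cont-field-L}, whose hypothesis holds trivially since the traces are eventually constant; but the exact equality is stronger and cleaner.)

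Finally I would assemble the estimate exactly as in Theorem~\ref{t:es}. For $l\geq N_2$, Theorem~\ref{t:fd-approx} bounds the distance from each infinite-dimensional algebra to its $N_1$-th finite-dimensional subalgebra by the uniform tail, and the middle term vanishes by the previous paragraph, so the triangle inequality for $\qpropinquity{}$ of \cite{Latremoliere13} gives
\begin{align*}
\qpropinquity{}\big((\mathsf{uhf}(\beta^l),L^{\rho_{\beta^l}}_{\gamma_{\beta^l}}),(\mathsf{uhf}(\beta),L^{\rho_\beta}_{\gamma_\beta})\big)
&\leq \sum_{k=N_1}^\infty\gamma_{\beta^l}(k)+0+\sum_{k=N_1}^\infty\gamma_\beta(k)\\
&< \varepsilon/3+\varepsilon/3<\varepsilon,
\end{align*}
and since $\varepsilon>0$ was arbitrary, continuity follows. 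Compared with Theorem~\ref{t:es}, the UHF case is genuinely easier, and there is essentially no analytic obstacle: the trace on each single-matrix building block is canonically the normalized trace, so Baire-space proximity upgrades the ``close finite-dimensional pieces'' step of the Effros--Shen argument to ``identical finite-dimensional pieces.'' The only points requiring care are this trace-uniqueness observation and the uniform summability $\gamma_{\beta'}(n)\leq 4^{-n}$; everything else is the formal triangle-inequality assembly.
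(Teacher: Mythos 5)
Your proof is correct, and its skeleton --- a uniform summable tail bound, exact agreement of the finite-dimensional stages for Baire-close parameters, and then the triangle inequality through Theorems \ref{t:fd-approx} and \ref{t:ind-lim-comm} --- is the same as the paper's, whose proof of this theorem simply instructs the reader to repeat the argument of Theorem \ref{t:es} with Baire-space convergence playing the role of convergence of irrationals. Where you genuinely depart from the paper is in the middle term of the triangle inequality. Following Theorem \ref{t:es} verbatim, as the paper intends, one would observe that the traces $\rho_{\beta^l,N_1}$ weak* converge to $\rho_{\beta,N_1}$ and then invoke Theorem \ref{t:cont-field-L} (via the analogue of \cite[Lemma 5.13]{Aguilar-Latremoliere15}) to make the propinquity between the two finite-dimensional quantum metric spaces smaller than $\varepsilon/3$. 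You instead note that each stage of a UHF system is a single full matrix algebra, whose unique tracial state is the normalized trace, so that for $l\geq N_2$ the restricted traces --- and with them the conditional expectations, the GNS norms, the sharp constants of Expression \eqref{eq:sharp-constants}, and the coefficients $\gamma_{\beta^l}(k)=\gamma_\beta(k)$ for $k\leq N_1$ --- coincide exactly; by Theorem \ref{t:ind-lim-comm} the two finite-dimensional quantum metric spaces are then fully quantum isometric and the middle term is exactly $0$. This is a real simplification: it makes the UHF theorem independent of the continuous-field machinery of Section \ref{s:cond-fd} (Lemma \ref{l:sup-conv}, Proposition \ref{p:Haus-conv-balls}, Theorem \ref{t:cont-field-L}), which remains essential in the Effros--Shen case, where $\tau_{\theta,N_1}$ depends on $\theta$ through $t(\theta,N_1)$ and therefore only converges weak* but never stabilizes. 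Your uniform estimate $\gamma_{\beta'}(n)\leq 4^{-n}$ likewise replaces the per-parameter dominating sequence used at the start of the proof of Theorem \ref{t:es} by a bound valid on all of $\BaireSpace$ at once. Both routes are valid: the paper's buys uniformity of method across its two examples, while yours buys a shorter and more elementary proof in the UHF case.
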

\begin{proof} 
This follows similarly as the proof of Theorem \ref{t:es} since convergence in the Baire space is equivalent to convergence of irrationals by \cite[Proposition 5.10]{Aguilar-Latremoliere15}.
\end{proof}

\bibliographystyle{amsplain}
\bibliography{thesis}
\vfill

\end{document}